\documentclass[preprint,12pt,number]{elsarticle}

\usepackage{amssymb}
\usepackage{mathrsfs}
\usepackage{hyperref}
\usepackage{amsmath}
\usepackage{amsthm}

\newtheorem{theorem}{Theorem}[section]
\newtheorem{lemma}[theorem]{Lemma}
\newtheorem{proposition}[theorem]{Proposition}

\newtheorem{remark}[theorem]{Remark}

\newtheorem{teo}{Theorem}[section]

\newtheorem{corollary}[teo]{Corollary}

\newcommand{\N}{\mathbb{N}}
\newcommand{\R}{\mathbb{R}}
\newcommand{\C}{\mathbb{C}}
\newcommand{\D}{\mathbb{D}}

\newcommand{\T}{\mathbb{T}}

\newcommand{\G}{\mathbb{G}}

\begin{document}

\begin{frontmatter}



\title{Interpolating Sequences For Dual Uniform Algebras} 


\author{Mario P. Maletzki} 

\affiliation{
            city={Teruel},
            country={Spain}}

\begin{abstract}
Given a dual uniform algebra $A=X^*$ with maximal ideal space $M_A$, we provide the first sufficient condition in terms of the Gleason distance of $A$ for a sequence in $M_A\cap X$ to be interpolating for $A$. We prove that a sequence in $\mathbb{D}^N$ is uniformly separated if and only if it is interpolating for $H^\infty(\mathbb{D}^N)$ and its sequence of norms satisfies the Blaschke condition, and then use this characterization to classify the interpolating sequences for $H^\infty(\mathbb{D}^N)$ in terms of its Gleason distance. We also study interpolating sequences for $\mathscr{H}^\infty$, the algebra of bounded Dirichlet series, obtaining necessary and sufficient conditions for a sequence in $\mathbb{C}+$ to be interpolating for this space, and relating the geometry of such sequences to that of the interpolating sequences for $H^\infty(\mathbb{C}+)$.  Finally, we show that a sequence in the Shilov boundary of the second dual of a uniform algebra $A$ is interpolating for $A^{**}$ if and only if it is discrete for the $w^*$-topology.
\end{abstract}


\begin{keyword}
Interpolating sequence \sep uniform algebra \sep Gleason distance \sep Dirichlet series


\MSC 46J15 \sep 30B50 \sep 32A35 

\end{keyword}

\end{frontmatter}



\section{Introduction}
\vspace{0.3cm}
Given a uniform algebra $A$ with maximal ideal space $M_A$, a sequence $(x_n)_n$ in $M_A$ is said to be interpolating for $A$ if for any bounded sequence of complex numbers $(\alpha_n)_n$ there exists $f \in A$ such that $f(x_n)=\alpha_n$ for every $n$. Interpolating sequences for uniform algebras, while of intrinsic interest, have proven to be useful in the study of composition operators \cite{GalindoGamelinLindstrom2004,Hosokawa2002}, and, as in the case of $H^\infty(\mathbb{D})$, they played a crucial role in describing the analytic structure of $\mathfrak{M}$, the maximal ideal space of $H^\infty(\mathbb{D})$ \cite{Hoffman1967}. The study of interpolating sequences for uniform algebras dates back to the seminal work of Carleson \cite{Carleson1958} in the 1950s, when he proved his celebrated interpolation theorem asserting that a sequence $(z_n)_n$ in $\mathbb{D}$ is interpolating for $H^\infty(\mathbb{D)}$ if and only if it satisfies \begin{equation}\label{Carleson's condition}
        \inf_{n\in\N}\prod_{k\neq n}\left| \frac{z_n-z_k}{1-\overline{z_n}z_k}\right|>0.
    \end{equation} Following Carleson's theorem, the uniform algebras for which interpolating sequences have been most extensively studied are the Hardy spaces $H^\infty(\Omega)$, consisting of bounded analytic functions on a bounded domain $\Omega \subset \mathbb{C}^n$ \cite{AglerMcCarthy2001,Amar1980,Berndtsson1985,BerndtssonChangLin1987,Kronstadt1974,KronstadtNeville1977}.

As a consequence of Carleson's theorem, we have that since the pseudohyperbolic distance $\rho_\mathbb{D}$ between two points $z,w\in \mathbb{D}$ is defined by $\rho_\mathbb{D}(z,w)=\left\vert{} \frac{z-w}{1-\overline{z}w}\right\vert{}$, the interpolating sequences for $H^\infty(\mathbb{D})$ are completely determined by the pseudohyperbolic distance between the points of the sequence. Considering then the pseudohyperbolic distance for a bounded domain $\Omega \subset \mathbb{C}^N$, which is defined as\begin{equation}\label{Dist-gleason}\rho_\Omega(z,w):=\sup\{|f(z)|:f\in H^\infty(\Omega),\ \lVert f \rVert\leq 1,\ f(w)=0 \},\end{equation}it seems natural to ask whether Carleson's theorem could be generalized to $H^\infty(\Omega)$. Berndtsson proved in \cite{Berndtsson1985} that the corresponding condition to \eqref{Carleson's condition} with the pseudohyperbolic distance $\rho_{\mathbb{B}^N}$ of the unit ball of $\mathbb{C}^N$ is sufficient---although no longer necessary when $N>1$---for a sequence in $\mathbb{B}^N$ to be interpolating for $H^\infty(\mathbb{B}^N)$. Moreover, together with Chang and Lin, he proved in \cite{BerndtssonChangLin1987} that the same condition with respect to the pseudohyperbolic distance in the polydisc is also sufficient for a sequence in $\mathbb{D}^N$ to be interpolating for $H^\infty(\mathbb{D}^N)$. These results led the authors to ask whether a uniformly separated sequence in a bounded domain $\Omega\subset\mathbb{C}^N$, that is, a sequence $(x_n)_n$ in $\Omega$ satisfying\begin{equation}\label{Carl-gen}\delta:= \inf_{n\in\mathbb{N}}\prod_{k\neq n}\rho_X(x_n,x_k)>0,\end{equation}is necessarily interpolating for $H^\infty(\Omega)$.

It is still unknown whether every uniformly separated sequence in a bounded domain $\Omega \subset \mathbb{C}^N$ must be interpolating for $H^\infty(\Omega)$, although it was proven in \cite{BerndtssonChangLin1987}  that a positive answer would follow if any sequence in $\mathbb{D}^N$ satisfying \eqref{Carl-gen} had a bound for its interpolation constant independent of $N$. Should such a uniform bound exist, it would imply that for any dual uniform algebra $A=X^*$ with Gleason distance $\rho_A$, every sequence $(x_n)_n$ in $M_A\cap X$ satisfying $\inf_{n\in\mathbb{N}}\prod_{k\neq n}\rho_A(x_n,x_k)>0$ is necessarily interpolating for $A$. This general interpolation problem, which was referred to as \emph{Carleson's generalized problem} in \cite{GalindoLindstromMiralles2009}, would thus carry profound implications due to the vast generality of this abstract framework. However, not only has this problem remained wide open, but no sufficient condition concerning the Gleason distance $\rho_A$ between the points of a sequence in $M_A\cap X$ to ensure interpolation for $A$ has been established so far.

One of the main goals of this paper is to provide the first such condition. In particular, considering for each $(x_n)_n$ in $M_A$ the sequence $\delta_n:=\prod_{k\neq n}\rho_A(x_n,x_k)$, we prove in Section \ref{Section-3} the following theorem:

\begin{theorem}\label{Main Theorem}Let $A=X^*$ be a dual uniform algebra and $(x_n)_n\subset M_A\cap X$ a sequence such that $\sum_{n=1}^\infty(1-\delta_n)^{1/3}<\infty$. Then $(x_n)_n$ is interpolating for $A$.\end{theorem}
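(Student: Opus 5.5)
The plan is to build, for each $n$, a peak-type function $F_n\in A$ with $F_n(x_k)=0$ for $k\neq n$, $F_n(x_n)$ close to $1$, and $\sup_{x\in M_A}\sum_n|F_n(x)|<\infty$. The usual linear-operator argument then gives interpolation.

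\emph{Step 1 (extremal functions).} Since $x_n,x_k\in X$, evaluation at these points is $w^*$-continuous. The closed unit ball of $A=X^*$ is $w^*$-compact, and the set $\{f:\|f\|\le1,\ f(x_k)=0\}$ is $w^*$-closed in it. Hence the supremum defining $\rho_A(x_n,x_k)$ is attained by some $f_{n,k}$, which we may rotate so that $f_{n,k}(x_n)=\rho_A(x_n,x_k)$. Let $g_N=\prod_{k\le N,k\neq n}f_{n,k}$. These partial products lie in the unit ball, so they have a $w^*$-cluster point $f_n$. Because evaluations at points of $X$ are $w^*$-continuous, $f_n$ satisfies $\|f_n\|\le1$, $f_n(x_k)=0$ for $k\ne n$, and $f_n(x_n)=\delta_n$.

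\emph{Step 2 (sharpening the peaks).} Put $\varepsilon_n=1-\delta_n$ and $F_n=f_n^{p_n}$ with $p_n\approx\varepsilon_n^{-2/3}$. Then $F_n(x_n)=\delta_n^{p_n}\ge e^{-C\varepsilon_n^{1/3}}$, which tends to $1$, and $\prod_n F_n(x_n)>0$ because $\sum\varepsilon_n^{1/3}<\infty$.

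The main obstacle is the uniform bound $\sum_n|F_n(x)|\le M$ for all $x\in M_A$. I would first try to split, for each fixed $x$, the indices into those with $|f_n(x)|\le 1-\varepsilon_n^{1/3}$ and the remaining ones. For the first group, $|F_n(x)|\le(1-\varepsilon_n^{1/3})^{\varepsilon_n^{-2/3}}\le e^{-\varepsilon_n^{-1/3}}\lesssim\varepsilon_n^{1/3}$, which is summable independently of $x$. For the second group I would show there are boundedly many such $n$. Suppose $|f_n(x)|$ and $|f_m(x)|$ are both close to $1$ with $n\ne m$. The Schwarz--Pick property of the Gleason metric, $\rho(f(x),f(y))\le\rho_A(x,y)$ for $\|f\|\le1$, gives:
\begin{itemize}
\item from $f_n(x_m)=0$, that $\rho_A(x,x_m)\ge|f_n(x)|$;
\item from $f_m(x_m)=\delta_m$ and $|f_m(x)|$ near $1$, combined with the Harnack-type inequality for Gleason parts applied to the representing measures of $x$ and $x_m$, that $x$ is Gleason-close to $x_m$.
\end{itemize}
These two conclusions conflict quantitatively once $\varepsilon_n,\varepsilon_m$ are small. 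Only finitely many indices have $\varepsilon_n$ not small, since $\varepsilon_n\to0$. The exponent $1/3$ is chosen to balance three losses: the thresholds in this splitting, the value $F_n(x_n)$, and the tail sum. If this direct counting fails, I would fall back on a Neumann-series approach. One uses the $F_n$ to define an approximate interpolation operator $T\alpha=\sum\alpha_nF_n/F_n(x_n)$ and shows that the errors are controlled by $\sum\varepsilon_n^{1/3}$.

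\emph{Step 3 (conclusion).} Once $\sum_n|F_n|\le M$ on $M_A$ is established, define $f=\sum_n\alpha_nF_n/F_n(x_n)$ for $(\alpha_n)\in\ell^\infty$. Since $F_n(x_n)$ is bounded below, the series is bounded pointwise by $M\|\alpha\|_\infty/\inf_nF_n(x_n)$. It converges in the $w^*$ topology: its partial sums are bounded, and they converge on the predual, which is checked through Step 1's compactness. The limit lies in $A$ and satisfies $f(x_k)=\alpha_k$, again because evaluation at $x_k\in X$ is $w^*$-continuous. Therefore $(x_n)_n$ is interpolating.
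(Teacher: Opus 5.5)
\textbf{There is a genuine gap.} It sits exactly at the step you flagged as the main obstacle. For $F_n=f_n^{p_n}$ the bound $\sup_{x\in M_A}\sum_n|F_n(x)|<\infty$ is false, whatever exponents $p_n$ you choose.

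Take $A=H^\infty(\D)$ and any Blaschke sequence $(z_n)_n$ in $\D$. Any $f$ with $\lVert f\rVert\le1$, $f(z_k)=0$ for $k\ne n$ and $f(z_n)=\delta_n$ is a unimodular multiple of the Blaschke product $B_n$ with zeros $\{z_k\}_{k\neq n}$. So your Step 1 forces $|f_n|=|B_n|\ge|B|$, where $B$ is the full Blaschke product. Since $\sup_{\D}|B|=1$, for each $K$ there is $z\in\D$ with $|F_n(z)|\ge|B(z)|^{p_n}>1/2$ for $n=1,\dots,2K$, hence $\sum_n|F_n(z)|>K$.

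The same example shows why your counting argument breaks down:
\begin{itemize}
\item At such a $z$ your ``second group'' contains every $n$ with $1-\varepsilon_n^{1/3}<|B(z)|$, so it has arbitrarily many indices.
\item Your second bullet is false. Knowing that $|f_m(x)|$ is close to $1$ and that $f_m(x_m)=\delta_m$ says nothing about $\rho_A(x,x_m)$, since $|B_m|$ is close to $1$ on large regions hyperbolically far from $z_m$. No Harnack-type inequality runs in that direction.
\item The Neumann-series fallback does not help. Because $F_n(x_k)=0$ for $k\neq n$, your operator already interpolates exactly; the only question is whether its partial sums are bounded in $A$. In a uniform algebra, $\sup_{|\alpha_n|\le1}\lVert\sum_{n\le N}\alpha_nF_n\rVert=\sup_{x\in M_A}\sum_{n\le N}|F_n(x)|$, which is the same failed estimate.
\end{itemize}

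A smaller point: $\sum(1-\delta_n)^{1/3}<\infty$ alone does not rule out $\delta_n=0$ for finitely many $n$. You need Lemma \ref{lema-tec} to get $\inf_nF_n(x_n)>0$.

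\textbf{The missing idea.} You should not try to control anything uniformly on $M_A$. The paper builds a single $G\in A$ with $\lVert G\rVert\le1$ such that $(G(x_n))_n$ is an $H^\infty(\D)$-interpolating sequence tending to $1$. It then transfers interpolation from the disc:
\begin{itemize}
\item by the Beurling--Rudin theorem, $\{G(x_n)\}_n\cup\{1\}$ is an interpolating set for $A(\D)$;
\item for $F\in A(\D)$, the composition $F\circ G$ lies in $A$ with $\lVert F\circ G\rVert=\lVert F\rVert$;
\item a $w^*$-limit of such compositions finishes the proof.
\end{itemize}

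To build $G$, the extremal functions are moved by M\"obius maps to $g_n=\varphi_n\circ(e^{i\sigma_n}f_n)$. This makes $g_n(x_n)=z_n$ a Naftalevi\v{c} interpolating sequence that eventually enters every horocycle at $1$ (Lemma \ref{lema-naft}), while $g_n(x_k)=-r_ne^{i\sigma_n}$ for $k\ne n$. Then $G$ is a $w^*$-limit of the images under $w\mapsto\frac{w-1}{w+1}$ of $\sum_{n\le N}\frac{1+g_n}{1-g_n}$.

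Boundedness of $G$ comes for free, because each summand has positive real part. The only estimates needed are at the points $x_n$: the off-diagonal terms are uniformly bounded because $\sum_k|1-r_ke^{i\sigma_k}|<\infty$. This is exactly where the exponent $1/3$ is used.
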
For a uniform algebra $A$ that is not necessarily a dual space, the theorem remains valid provided the interpolation is performed by functions in $A^{**}$. As a consequence, we recover as corollaries the theorems in \cite{CarneColeGamelin1989} and \cite{GalindoGamelinLindstrom2004} regarding the existence of interpolating sequences on sets that are not hyperbolically bounded.

Since the importance of interpolating sequences for $H^\infty(\mathbb{D}^N)$ was made evident in \cite{BerndtssonChangLin1987}, we continue its study in Section \ref{Section-polydisc}. We prove that an interpolating sequence for $H^\infty(\mathbb{D}^N)$ satisfying $\sum_{n=1}^\infty(1-\lVert z_n\rVert)<\infty$ must necessarily be uniformly separated. This yields a complete characterization of uniformly separated sequences in $\mathbb{D}^N$, which in turn allows us to classify the interpolating sequences for $H^\infty(\mathbb{D}^N)$ in terms of the Gleason distance.

In Section \ref{Section-4}, we consider the uniform algebra $\mathscr{H}^\infty$, consisting of bounded analytic functions $f$ in $\mathbb{C}_+=\{s:\text{Re}(s)>0\}$ that can be represented by a Dirichlet series$$f(s)=\sum_{n=1}^\infty \frac{a_n}{n^s}.$$The study of this algebra was initiated by H. Bohr, who considered the partial sums of the Riemann zeta function $\zeta(s)$ to study its zeros. Bohr observed that each function in $\mathscr{H}^\infty$ could be associated with a function in $H^\infty(B_{c_0})$, and it was proven in \cite{HedenmalmLindqvistSeip1997} that this association is in fact an isometric isomorphism. Using this connection with the Hardy space $H^\infty(B_{c_0})$, we provide a simple description of the Gleason distance for $\mathscr{H}^\infty$ and obtain new results on the interpolating sequences for this space. The first results on interpolating sequences for $\mathscr{H}^\infty$ appeared in \cite{seip2009interpolation}, where it was shown that among bounded sequences, the interpolating sequences for $\mathscr{H}^\infty$ are precisely the interpolating sequences for $H^\infty(\mathbb{C}_+)$. However, a complete geometric description of the interpolating sequences for $\mathscr{H}^\infty$, not necessarily bounded, remains a difficult problem. We use Theorem \ref{Main Theorem} to establish a general procedure for constructing unbounded interpolating sequences for $\mathscr{H}^\infty$, which reveals some surprising properties of the interpolation in this space. Moreover, using Kronecker's theorem on Diophantine approximation, we prove that for any sequence of positive real numbers $(\sigma_n)_n$ converging to $0$ such that $\limsup_n\frac{\sigma_{n+1}}{\sigma_n}=1$, there exists a sequence $(s_n)_n$ with $\text{Re}(s_n)=\sigma_n$ that is interpolating for $H^\infty(\mathbb{C}_+)$ but fails to be interpolating for $\mathscr{H}^{\infty}$. This highlights the different geometric natures of the interpolating sequences for $\mathscr{H}^{\infty}$ and $H^\infty(\mathbb{C}_+)$ in the unbounded setting.

In Section \ref{Section-5}, we focus on interpolating sequences for the second dual of a uniform algebra $A^{}$ that do not necessarily lie in $M_A$. Motivated by Hoffman's theorem asserting that a sequence in the Shilov boundary of $H^\infty(\mathbb{D})$ is interpolating if and only if it is discrete for the $w^*$-topology, we prove that $\partial_{A^{**}}$, the Shilov boundary of $A^{**}$, is always a hyperstonean space, and then show that for sequences in $\partial_{A^{**}}$ being interpolating for $A^{**}$ is equivalent to being discrete for the $w^*$-topology.

We conclude in Section \ref{Section-6} with a new approach to Carleson's generalized problem, which was the main motivation of this work, reducing the problem to a quantitative comparison between interpolation constants and Carleson intensities of sequences in polydiscs.

\section{Preliminaries}\label{Preliminaries}

\vspace{0.3cm}
\subsection{The Gleason distance}
\vspace{0.3cm}
Given a uniform algebra $A$, the Gleason distance between two elements $x$ and $y$ of $M_A$ is defined by 
\[
\rho_A(x,y):=\sup\{ |f(x)|: f\in A, \ \lVert f \rVert\leq 1, \ f(y)=0 \}.
\] 
The relation $x\sim y$ defined by $\rho_A(x,y)<1$ is an equivalence relation whose equivalence classes are called Gleason parts, and whenever a Gleason part consists of a single point, that point will be called trivial. It is not hard to prove that the Gleason distance satisfies 
\begin{equation}\label{Gleason-equality}
    \rho_A(x,y)=\sup\{ \rho_\D(f(x),f(y)): f\in A, \ \lVert f \rVert< 1 \},
\end{equation} 
and therefore that it is a generalization of Carathéodory's distance for a bounded domain $G$ of a Banach space. The second dual of $A$ is also a uniform algebra with any of the Arens products introduced in \cite{Arens1951}, which necessarily have to be equal, and the Gleason distance $\rho_{A^{**}}$ satisfies 
\[
\rho_{A^{**}}(x,y)=\rho_A(x,y)\quad \text{for every } x,y\in M_A.
\] 
The Shilov boundary of $A$, which is denoted by $\partial_A$, is the smallest closed subset of $M_A$ in which every function $f$ of $A$ attains its norm. It is well known that the Shilov boundary of $H^\infty(\D)$ is homeomorphic to the maximal ideal space of $L^\infty(\T)$, and moreover, that every point in its Shilov boundary is trivial (see \cite{Garnett1981}).

For a sequence $(x_n)_n$ in $M_A$, we define its associated interpolating operator $T:A\to \ell_\infty$ by $T(f):=(f(x_n))_n$ and its constant of interpolation by  $$ M:=\sup_{\lVert(\alpha_n)_n\rVert\leq1}\inf\{\lVert f \rVert:f\in A, \ f(x_n)=\alpha_n\ \mbox{for every } n\}.$$ Clearly, $(x_n)_n$ is interpolating for $A$ if and only if $T$ is surjective, and if $(x_n)_n$ is interpolating the Open Mapping theorem implies that its constant of interpolation if finite. It follows from \eqref{Gleason-equality}  and the Schwarz-Pick inequality that any interpolating sequence for $A$ must be separated  for the Gleason distance, that is, there exists a $\delta>0$ such that $\rho_A(x_n,x_k)\geq \delta$ whenever $n\neq k$. Moreover, if $S$ is the interpolating operator associated with another sequence $(y_n)_n$ of $M_A$, then 
\[
\lVert T-S \rVert=\sup_{\lVert f \rVert\leq 1}\{ \lVert (f(x_n)-f(y_n))_n \rVert \}\leq 2\sup_n\rho_A(x_n,y_n).
\] As a consequence, if $(x_n)_n$ is an interpolating sequence for $A$ with interpolating constant $M>0$, then any sequence $(y_n)_n$ in $M_A$ with $\sup_n\rho_A(x_n,y_n)<\frac{1}{2M}$ must also be interpolating for $A$. 

An effective formula for the Gleason distance is hardly available. However, if $\rho_{\D^N}$ denotes the Gleason distance of $H^\infty(\D^N)$, it is well known that if $z=(z_1,\ldots,z_N)$ and $w=(w_1,\ldots,w_N)$ belong to $\D^N$, then it is 
\[
\rho_{\D^N}(\delta_z,\delta_w)=\max_{1\leq i\leq N}\left| \frac{z_i-w_i}{1-\overline{z_i}w_i}\right|.
\] 
If $B_E$ denotes the unit ball of either a Hilbert space or a $C_0(X)$ space, the Gleason distance of $H^\infty(B_E)$ between the points of $B_E$ was given in \cite{AronGalindoLindstrom2003}, and if for $R>1$ we let $A_R$ be the annulus $\{z\in \C: R^{-1}<|z|<R\}$, then a computable formula for the Gleason distance for $H^\infty(A_R)$ between the points of $A_R$ follows from \eqref{Gleason-equality} and \cite{Simha1975}. We will now provide a simple description of the Gleason distance for two new uniform algebras, namely, for the algebra of bounded Dirichlet series in $\C_+$ and for the algebra of bounded analytic and symmetric functions in $\D^2$:

\subsubsection{Gleason distance for $\mathscr{H}^\infty$}

We first show that, as a consequence of \emph{Bohr's point of view}, the Gleason distance between two points $s_1$ and $s_2$ in $\C_+$ satisfies 
\begin{equation}\label{Gleason}
    \rho_{\mathscr{H}^{\infty}}(s_1,s_2)=\sup_{p}\{\rho_\D(p^{-s_1},p^{-s_2})\},
\end{equation} 
where the supremum is taken over the prime numbers. Bohr observed that if we factor each $n$ into its product of primes $n=p_1^{\alpha_1}\ldots p_r^{\alpha_r}$ (where $p_m$ denotes the $m$-th prime number) and we let $z_m:=p_m^{-s}$, then for any function $f(s)=\sum_{n=1}^\infty a_n n^{-s}$ in $\mathscr{H}^{\infty}$ we have that 
\[
f(s)=\sum_{n=1}^\infty a_n(p_1^{\alpha_1})^{-s}\ldots (p_r^{\alpha_r})^{-s} =\sum_{n=1}^\infty a_n z_1^{\alpha_1}\ldots z_r^{\alpha_r},
\] 
so we may identify each function in $\mathscr{H}^{\infty}$ with a function in $H^\infty(B_{c_0})$. Moreover, it was shown in \cite{HedenmalmLindqvistSeip1997} that this identification is in fact an isometric isomorphism, so in particular we have that $\rho_{\mathscr{H}^{\infty}}(s_1,s_2)=\rho_{c_0}((p_n^{-s_1})_n,(p_n^{-s_2})_n)$, where $\rho_{c_0}$ denotes the Gleason distance for $H^\infty(B_{c_0})$, and thus \eqref{Gleason} follows directly from \cite{AronGalindoLindstrom2003}.

Observe that if $s$ and $t$ are positive real numbers, then we simply have $\rho_{\mathscr{H}^{\infty}}(s,t)=\rho_\D(2^{-s},2^{-t})$, and moreover, since the pseudohyperbolic distance in $\D$ is invariant through rotations, it is $\rho_{\mathscr{H}^{\infty}}(s+iy,t+iy)=\rho_{\mathscr{H}^{\infty}}(s,t)$ for every real number $y$.

\subsubsection{Gleason for $H^\infty_s(\D^2)$}

We now consider for any $N>1$ the algebra $H^\infty_s(\D^N)$ consisting of those bounded analytic functions $f$ in $\D^N$ that are symmetric, that is, such that $f(z_1,\ldots,z_N)=f(z_{\pi(1)},\ldots,z_{\pi(N)})$ for every $(z_1,\ldots,z_N)$ in $\D^N$ and every permutation $\pi$ of $N$.

For its study, we first recall that given the map $\sigma_N=(\sigma_{N,1},\ldots,\sigma_{N,N}):\C^N\rightarrow\C^N$ where 
\[
\sigma_{N,k}(z_1,\ldots,z_n)=\sum_{1\leq j_1<\ldots<j_k\leq N}z_{j_1}\ldots z_{j_k}, \quad \text{for } 1\leq k\leq N,
\] 
the $N$-dimensional symmetrized polydisc is defined by $\G_N:=\sigma_N(\D^N)$. If we consider the operator $T:H^\infty(\G_N)\rightarrow H^\infty_s(\D^N)$ that maps $f\mapsto f\circ\sigma_N$, we easily see that $T$ is a well defined isometry. Moreover, it turns out that $T$ is also surjective. In fact, if $f$ belongs to $H^\infty_s(\D^N)$ and satisfies 
\[
f(z_1,\ldots,z_N)=\sum_{m=0}^\infty P_m(z_1,\ldots,z_N)\quad \text{for every } (z_1,\ldots,z_N)\in\D^N,
\]
uniqueness of Taylor's series implies that $P_m$ is a symmetric $m$-homogeneous polynomial for every $m$, and thus the Fundamental Theorem of Symmetric Polynomials guarantees the existence of a polynomial $p_m\in \C[z_1,\ldots,z_N]$ such that $p_m\circ \sigma_N=P_m$ for every $m$. It is then straightforward that the function $g:=\sum_{m=0}^\infty p_m$ belongs to $H^\infty(\G_N)$ and satisfies $T(g)=f$, so we conclude that $H^\infty_s(\D^N)$ is isometrically isomorphic to the Hardy space $H^\infty(\G_N)$.

As a consequence we have that $\rho_{H^\infty_s(\D^N)}(z,w)=\rho_{H^\infty(\G_N)}(\sigma_N(z),\sigma_N(w))$ for each $z,w\in \D^N$, and as for $N=2$ it follows from \cite{AglerYoung2004} that if $(s_1,p_1)=\sigma_2(z)$ and $(s_2,p_2)=\sigma_2(w)$, then 
\begin{equation}\label{Gleason-sym}
    \rho_{H^\infty_s(\D^2)}(z,w)=\sup_{\alpha\in\T}\left|\frac{(s_2p_1-s_1p_2)\alpha^2+2(p_2-p_1)\alpha+s_1-s_2}{(s_1-\overline{s_2}p_1)\alpha^2-2(1-p_1\overline{p_2})\alpha+\overline{s_2}-s_1\overline{p_2}}\right|.
\end{equation} 

For $N\geq3$, even though the geometry of $\G_N$ has been extensively studied and some estimates for the Carathéodory distance are known (see \cite{NikolovPflugThomasZwonek2008}), there is no computable formula as for $N=2$. 
\begin{remark}
It should be plain that considering any pair of symmetric polynomials $p(z_1,z_2)$ and $q(z_1,z_2)$ that generate $\C[z_1,z_2]$ and are algebraically independent, if we let $\Omega=\{(p(z_1,z_2),q(z_1,z_2)): z_1,z_2\in\D\}$, then a computable formula of the Gleason distance for $H^\infty(\Omega)$ follows directly from \eqref{Gleason-sym}.
\end{remark}
\vspace{0.3cm}
\subsection{Interpolating sequences for $H^\infty(\D)$} 
\vspace{0.3cm}
Sequences in $\D$ that are interpolating for $H^\infty(\D)$ are especially well-behaved, most notably because they are Blaschke sequences. Recall that a sequence $(z_n)_n$ in $\D$ is called Blaschke provided $\sum_{n=1}^\infty (1-|z_n|)<\infty$, and that for any such sequence we may consider its associated Blaschke product 
\[
B(z):=z^m\prod_{z_n\neq 0}\frac{-\overline{z_n}}{|z_n|}\frac{z-z_n}{1-\overline{z_n}z}\quad \text{for } z\in\D,
\] 
where $m=1$ if $z_n=0$ for some $n$ and $m=0$ otherwise, which is easily seen to satisfy $B(z)=0$ if and only if $z=z_n$ for some $n$. Since interpolating sequences for $H^\infty(\D)$ are Blaschke, adding a finite number of points to an interpolating sequence yields another interpolating sequence. It is also plain that interpolating sequences are invariant under automorphisms of $\D$, and as a consequence of Carleson's theorem also through conjugation. Furthermore, we have that any sequence that is eventually close enough to an interpolating sequence must also be interpolating:

\begin{lemma}\label{estab-int}
    If $(z_n)_n\subset \D$ is an interpolating sequence for $H^\infty(\D)$, then every sequence $(w_n)_n$ such that $\rho_\D(z_n,w_n)\rightarrow0$ is also interpolating. 
\end{lemma}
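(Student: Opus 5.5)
The plan is to combine Carleson's theorem with the perturbation remark from the preliminaries. That remark says that if $(x_n)_n$ is interpolating with constant $M$, then any $(y_n)_n$ with $\sup_n\rho_A(x_n,y_n)<\frac{1}{2M}$ is interpolating. Here $\rho_{\D}(z_n,w_n)\to 0$, so this bound holds only for a tail. We therefore work with tails and then restore the finitely many initial points.

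\emph{Step 1: pass to a tail.} Let $M$ be the interpolation constant of $(z_n)_n$. For any $n_0$, the tail $(z_n)_{n\geq n_0}$ is interpolating with constant at most $M$. Choose $n_0$ with $\rho_\D(z_n,w_n)<\frac{1}{2M}$ for $n\ge n_0$. Comparing the interpolating operators $T,S:H^\infty(\D)\to\ell_\infty$ of $(z_n)_{n\ge n_0}$ and $(w_n)_{n\ge n_0}$ gives $\lVert T-S\rVert\le 2\sup_{n\ge n_0}\rho_\D(z_n,w_n)<1/M$. The remark in the preliminaries then shows that $(w_n)_{n\geq n_0}$ is interpolating. (If one prefers to reprove this, use that $T$ surjective with constant $M$ means $T$ maps the ball of radius $M$ onto a set containing the unit ball of $\ell_\infty$; a standard Neumann-series iteration shows the same for $S$ with a slightly larger constant.)

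\emph{Step 2: add back finitely many points.} The introduction notes that adding finitely many points to an interpolating sequence yields an interpolating sequence, because interpolating sequences are Blaschke. The only subtlety is that the full sequence $(w_n)_n$ must consist of distinct points. If $w_j=w_k$ for some $j\neq k$, interpolation of arbitrary data is impossible. So we interpret the statement with the $w_n$ distinct, which is implicit, since otherwise no sequence could be interpolating.

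To carry out the addition concretely, let $B$ be the Blaschke product of $(w_n)_{n\ge n_0}$ and let $F\in H^\infty$ interpolate the tail data. For finitely many extra distinct points $w_1,\dots,w_{n_0-1}$ not lying in the tail, we want $f=F+B\cdot p$ with $p$ a polynomial. Such $f$ agrees with $F$ on the tail, since $B$ vanishes there. It then suffices to choose $p$ by Lagrange interpolation with $p(w_j)=(\alpha_j-F(w_j))/B(w_j)$; this is possible because $B(w_j)\neq0$.

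The main obstacle is Step 1, namely ensuring that the perturbation bound applies uniformly to tails. This requires that the tail's interpolation constant does not exceed $M$. That holds because any interpolant of the full sequence with the tail data extended by zeros also interpolates the tail.
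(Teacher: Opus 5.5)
Your proof is correct and follows the paper's argument: pass to a tail on which $\rho_\D(z_n,w_n)<\frac{1}{2M}$, use that the tail's interpolation constant is at most $M$, and conclude from the perturbation estimate $\lVert T_N-S_N\rVert<1/M$. Your Step 2, which adds back the finitely many initial points via the Blaschke product of the tail and Lagrange interpolation, spells out what the paper compresses into ``in particular.'' Your remark that the $w_n$ must be pairwise distinct is also right: the statement is false without this implicit hypothesis.
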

\begin{proof}
Let $M>0$ be the constant of interpolation of $(z_n)_n$, and $T_N$ and $S_N$ be the interpolating operators associated with the truncated sequences $(z_n)_{n=N}^\infty$ and $(w_n)_{n=N}^\infty$ respectively. Since the constant of interpolation of $(z_n)_{n=N}^\infty$ is trivially bounded by $M$, considering a positive integer $N$ such that $\rho_\D(z_n,w_n)<\frac{1}{2M}$ for each $n\geq N$ we have that 
\[
\lVert T_N-S_N \rVert=\sup_{\lVert f \rVert_\infty\leq1}\{ \lVert (f(z_n)-f(w_n))_{n=N}^\infty \rVert \}\leq 2 \sup_{n\geq N}\rho_\D(z_n,w_n)<\frac{1}{M},
\] 
so $S_N$ is surjective and in particular $(w_n)_n$ is interpolating for $H^\infty(\D)$.
\end{proof}

\begin{remark}
    Even though the previous lemma is false for a general uniform algebra $A$, it is straightforward to see that it also holds for $H^\infty(\Omega)$, where $\Omega$ is a proper simply connected domain of $\C$. It is also not hard to prove that the lemma is true for interpolating sequences for $\mathscr{H}^\infty$.
\end{remark}


For Blaschke sequences of positive real numbers, we will make use of two inequalities, which we state in the following lemma: 

\begin{lemma}\label{lema-blaschke}
    Consider a Blaschke sequence $(\delta_n)_n$ in $(0,1)$ and let $0<\alpha< \delta_n$ for every $n$. Then the following inequalities hold: 
    \[
    \prod_{n=1}^\infty \frac{\delta_n-\alpha}{1-\delta_n\alpha}\geq\frac{\left(\prod_{n=1}^\infty \delta_n\right)-\alpha}{1-\alpha\left(\prod_{n=1}^\infty \delta_n\right)},\quad \quad1-\prod_{n=1}^\infty\delta_n\leq \sum_{n=1}^\infty(1-\delta_n).
    \]
\end{lemma}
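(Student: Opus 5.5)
Both inequalities are statements about finite products, so the plan is to prove them for the partial products $P_N:=\prod_{n=1}^N\delta_n$ and then let $N\to\infty$. The Blaschke condition $\sum(1-\delta_n)<\infty$ with $\delta_n\in(0,1)$ ensures that $P:=\prod_{n=1}^\infty\delta_n$ converges to a positive number, and $P_N\downarrow P$. For the first inequality we also need $\alpha<P$; otherwise the right-hand side is nonpositive while the left-hand side is nonnegative (each factor lies in $(0,1)$ since $0<\alpha<\delta_n<1$), and there is nothing to prove.

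For the second inequality I will prove $1-\prod_{n=1}^N x_n\le\sum_{n=1}^N(1-x_n)$ for $x_n\in[0,1]$ by induction on $N$. The inductive step is
\[
1-P_{N}x_{N+1}=(1-P_N)+P_N(1-x_{N+1})\le(1-P_N)+(1-x_{N+1}),
\]
which uses $P_N\le 1$. Letting $N\to\infty$ then gives $1-P\le\sum(1-\delta_n)$.

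For the first inequality, write $\varphi_\alpha(t):=\frac{t-\alpha}{1-\alpha t}$. This is the disc automorphism sending $\alpha$ to $0$, restricted to the real line. The claim for finite products is
\[
\prod_{n=1}^N\varphi_\alpha(\delta_n)\ge\varphi_\alpha\Big(\prod_{n=1}^N\delta_n\Big),
\]
and the key step is the two-factor case, $\varphi_\alpha(a)\varphi_\alpha(b)\ge\varphi_\alpha(ab)$ for $\alpha<a,b<1$. Given that, induction is immediate. Applying the two-factor case with $a=P_N$ and $b=\delta_{N+1}$ (note $P_N\ge P>\alpha$ in the relevant case) and using $\varphi_\alpha(P_N)\le\prod_{n\le N}\varphi_\alpha(\delta_n)$ yields the $N+1$ case, since all the quantities involved are positive.

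The two-factor inequality is the step that requires an actual computation, so I expect it to be the main obstacle. I would first try to verify it directly. The denominators are positive, so I would cross-multiply and compare
\[
(a-\alpha)(b-\alpha)(1-\alpha ab)\quad\text{with}\quad(ab-\alpha)(1-\alpha a)(1-\alpha b).
\]
Expanding, the difference (left minus right) should factor as $\alpha(1-a)(1-b)(1-\alpha^2)$ or something similar, which is nonnegative.

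If the algebra gets messy, I would fall back on a conceptual route. For fixed $a$, the function $b\mapsto\varphi_\alpha(a)\varphi_\alpha(b)-\varphi_\alpha(ab)$ vanishes at $b=1$. One can then compare derivatives, or use the Schwarz–Pick contraction $\rho_\D(\varphi_\alpha(x),\varphi_\alpha(y))=\rho_\D(x,y)$, to fix the sign.

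Once the finite inequality holds, I pass to the limit. Continuity of $\varphi_\alpha$ at $P$ gives $\varphi_\alpha(P_N)\to\varphi_\alpha(P)$. The partial products $\prod_{n\le N}\varphi_\alpha(\delta_n)$ are decreasing, so they converge to the infinite product, and the inequality is preserved in the limit.
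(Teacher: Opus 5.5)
Your proposal is correct and follows the paper's route exactly: the paper only remarks that both inequalities hold for finite products by induction and then pass to the limit, which is what you do. The computation you left as a guess does close, but with a different factorization than you predicted: $(a-\alpha)(b-\alpha)(1-\alpha ab)-(ab-\alpha)(1-\alpha a)(1-\alpha b)=\alpha(1+\alpha)(1-a)(1-b)(1-ab)\geq 0$, so the two-factor inequality holds for all $a,b\in[0,1]$, and the inductive step needs only $\delta_{N+1}>\alpha$ (to multiply by a nonnegative factor), not $P_N>\alpha$.
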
 

One may prove that both inequalities hold for finite products by induction, and then the lemma follows by considering limits.

For the disk algebra $A(\D)$, which consists of those analytic functions on $\D$ which are continuous in $\overline{\D}$, a set $V\subset \overline{\D}$ is said to be interpolating provided every continuous function on $V$ is the restriction of a function in $A(\D)$. The next theorem, attributed to A. Beurling and W. Rudin and which is a special case of \cite{HeardWells1969}, characterizes the interpolating sets for $A(\D)$. For a proof see \cite{Wojtaszczyk1991}.


\begin{theorem}\label{Beurling-Rudin}
    Let $V\subseteq \overline{\D}$ be a closed subset. Then $A(\D)|_V=C(V)$ if and only if $m(V\cap \T)=0$ and $V\cap \D$ is an interpolating sequence for $H^\infty(\D)$.
\end{theorem}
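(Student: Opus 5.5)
The plan is to prove the two implications separately. Throughout, write $E:=V\cap\T$ and $V\cap\D=:\{z_n\}$. The classical tools I will assume are the F.~and M.~Riesz theorem, the Rudin--Carleson theorem, and Fatou's result that a closed null subset of $\T$ is a peak set for $A(\D)$.

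\emph{Necessity.} Assume $A(\D)|_V=C(V)$. By the Open Mapping theorem there is $M>0$ such that every $g\in C(V)$ extends to some $f\in A(\D)$ with $\lVert f\rVert\leq M\lVert g\rVert_V$.
\begin{itemize}
\item[(a)] $m(E)=0$. If $m(E)>0$, choose disjoint closed sets $K_1,K_2\subset E$, both of positive measure. By Tietze there is $g\in C(V)$ with $g=0$ on $K_1$ and $g=1$ on $K_2$. Its extension $f$ vanishes on a set of positive measure, so $f\equiv0$ by F.~and M.~Riesz, contradicting $f=1$ on $K_2$.
\item[(b)] $V\cap\D$ has no accumulation point in $\D$. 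Suppose $z_{k}\to z_0\in V\cap\D$ with $z_k\neq z_0$. Take $g_k\in C(V)$ with $\lVert g_k\rVert=1$, $g_k(z_k)=1$ and $g_k(z_0)=0$. Each extension $f_k$ satisfies $\lVert f_k\rVert\leq M$. Schwarz--Pick then gives $1=|f_k(z_k)-f_k(z_0)|\leq 2M\rho_\D(z_k,z_0)\to0$, a contradiction. Hence $V\cap\D$ is a sequence accumulating only on $E$.
\item[(c)] $V\cap\D$ is interpolating. Let $(\alpha_n)_n$ be bounded. For each $N$, the function equal to $\alpha_n$ at $z_n$ for $n\leq N$ and $0$ elsewhere on $V$ is continuous, because these finitely many points are isolated in $V$. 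Its extensions have norm at most $M\sup|\alpha_n|$. A normal-families limit of them lies in $H^\infty(\D)$ and interpolates $(\alpha_n)_n$.
\end{itemize}

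\emph{Sufficiency.} Let $g\in C(V)$. By Rudin--Carleson, since $m(E)=0$, there is $F_0\in A(\D)$ with $F_0=g$ on $E$. So it suffices to treat $h:=g-F_0$, which vanishes on $E$. Because $V$ is closed, $h(z_n)\to0$ as $z_n$ approaches $E$. The core of the proof is an approximation lemma: there are constants $C$ and $\theta<1$ such that for every $h\in C(V)$ vanishing on $E$ there is $u\in A(\D)$ with $u=0$ on $E$, $\lVert u\rVert\leq C\lVert h\rVert$, and $\sup_n|h(z_n)-u(z_n)|\leq\theta\lVert h\rVert$. The standard geometric iteration then produces an exact solution. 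Apply the lemma to the successive residuals, which still vanish on $E$ and have norm at most $\theta^k\lVert h\rVert$ on $V$ after $k$ steps. The resulting series converges uniformly in $A(\D)$.

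To build $u$, let $M$ be the interpolation constant of $(z_n)_n$ and fix $\varepsilon>0$ small relative to $\lVert h\rVert$.
\begin{itemize}
\item[1.] Let $F$ be the finite set of $z_n$ with $|h(z_n)|\geq\varepsilon$.
\item[2.] Let $P\in A(\D)$ be a peak function for $E$, so $P=1$ on $E$ and $|P|<1$ on $\overline{\D}\setminus E$. Choose $m$ so large that $|P^m|<\varepsilon$ on $F$, and set $q:=1-P^m$. Then $q=0$ on $E$, $\lVert q\rVert\leq2$, and $|q-1|<\varepsilon$ on $F$.
\item[3.] By continuity choose a neighbourhood $U$ of $E$ on which $|q|<\varepsilon/M$. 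Only finitely many $z_n$ lie outside $U$; call this finite set $G$, which contains $F$.
\item[4.] Solve the finite Nevanlinna--Pick problem with $p=h$ on $F$ and $p=0$ on $G\setminus F$. The minimal-norm solution is a constant multiple of a finite Blaschke product, so $p\in A(\D)$, and $\lVert p\rVert\leq M\lVert h\rVert$.
\item[5.] Put $u:=pq$. Then $\lVert u\rVert\leq 2M\lVert h\rVert$ and $u=0$ on $E$.
\end{itemize}
The errors at the $z_n$ are then controlled on each of the three pieces:
\begin{itemize}
\item on $F$, $|h-u|\leq M\lVert h\rVert\varepsilon$;
\item on $G\setminus F$, $|h-u|=|h|<\varepsilon$;
\item off $G$, $|h-u|\leq|h|+\lVert p\rVert\,|q|\leq\varepsilon+\lVert h\rVert\varepsilon$.
\end{itemize}
This gives the lemma with $\theta$ as small as desired.

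The main obstacle is exactly this ordering of choices. The finite interpolant $p$ has to be produced after the neighbourhood $U$, which in turn depends on $q$. This is why $q$ must be built from the peak function only, using the finite set $F$ and not $G$, and why $p$ is asked to vanish on $G\setminus F$ instead of matching $h$ there. The facts about peak sets and Pick interpolants that this relies on are classical and I would simply cite them.
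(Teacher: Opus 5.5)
Your proof is correct. Note that the paper gives no argument of its own for this statement. It quotes it as the Beurling--Rudin theorem, a special case of Heard--Wells, and refers to Wojtaszczyk's book for a proof, so the comparison is with a citation rather than an in-paper proof.

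Your route is self-contained modulo four classical inputs: boundary uniqueness for $A(\D)$, Rudin--Carleson, the fact that closed null subsets of $\T$ are peak sets, and finite Nevanlinna--Pick interpolation by multiples of finite Blaschke products. Necessity comes from Urysohn/Tietze, Schwarz--Pick and a normal-families limit. Sufficiency removes the boundary data with Rudin--Carleson and then runs an approximate-and-iterate scheme with $u=p\,(1-P^m)$. You correctly identify the crux: $q$ is chosen before $G$, and $p$ is made to vanish on $G\setminus F$, so that $\|p\|\le M\|h\|$ holds uniformly. What this buys is an elementary, constructive proof with an extension constant depending only on $M$; with $\varepsilon=1/4$ and the norm-preserving form of Rudin--Carleson one gets roughly $(1+8M)\|g\|$. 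The cited sources instead obtain the statement within a general theory of interpolation sets for subalgebras of $H^\infty$.

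A few points to tidy:
\begin{itemize}
\item Normalize $\|h\|=1$, or define $F$ by $|h(z_n)|\ge\varepsilon\|h\|$. As written, $\varepsilon$ is dimensionless in $|q-1|<\varepsilon$ but scaled in the threshold for $h$, so your bounds are not homogeneous and $\theta$ would depend on $h$. After normalizing, the errors on the three pieces are $\le\varepsilon$, $<\varepsilon$ and $<2\varepsilon$. On $F$ you have $h-u=h(1-q)$, so no factor $M$ is needed. This gives $\theta=2\varepsilon$ and $C=2M$, independent of $h$.
\item The inclusion $F\subset G$ needs $1-\varepsilon\ge\varepsilon/M$, which holds for $\varepsilon\le1/2$ since $M\ge1$.
\item If $E=\emptyset$, then $V\cap\D$ is finite and you can simply take $q\equiv1$.
\end{itemize}
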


Our interest in the disk algebra comes from the fact that given a function $g$ in a uniform algebra $A$ with $\lVert g \rVert\leq 1$ and any function $f$ of $A(\D)$, the composition $f\circ g$ belongs to $A$ and satisfies $\lVert f\circ g \rVert=\lVert f \rVert$. This follows from the fact that $A(\D)$ is the uniform closure of the polynomials in $\D$, and it can be easily seen that if $f$ only belongs to $H^\infty(\D)$ then the composition $f\circ g$ may no longer belong to $A$. In fact, given any uniform algebra $A$, if we consider a constant function $g\equiv c$ for some $c\in \T$ and a bounded analytic function in $\D$ that cannot be continuously extended to $c$, the composition is not even defined. However, if $g\in A$ satisfies $\lVert g \rVert<1$ then the composition with functions from $H^\infty(\D)$ is well defined and belongs to $A$. To see this observe that if $f(z)=\sum_{n=0}^\infty\frac{f^{(n)}(0)}{n!}z^n$ for every $z\in\D$, then Cauchy's inequalities yield 
\[
\sum_{n=0}^\infty \left\lVert \frac{f^{(n)}(0)}{n!}g^n \right\rVert \leq \lVert f \rVert\sum_{n=0}^\infty \lVert g^n \rVert=\lVert f \rVert\sum_{n=0}^\infty \lVert g \rVert^n<\infty,
\] 
and thus by completeness of $A$ we have that $f\circ g=\sum_{n=0}^\infty \frac{f^{(n)}(0)}{n!}g^n\in A$. 

A key result in the proof of Theorem \ref{Main Theorem} will be the following one due to Naftalevi\v{c}: 

\begin{theorem}
    Given a sequence of real numbers $(r_n)_n$ in $(0,1)$ satisfying $\sum_{n=1}^\infty(1-r_n)<\infty$, there is a sequence $(z_n)_n\subset \D$ which is interpolating for $H^\infty(\D)$ and satisfies $|z_n|=r_n$ for every $n$.
\end{theorem}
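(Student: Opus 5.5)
This is Naftalevič's theorem, so the task is to construct the arguments $\theta_n$ so that $z_n=r_ne^{i\theta_n}$ is interpolating. By Carleson's theorem it suffices to obtain $\inf_n\prod_{k\neq n}\rho_\D(z_n,z_k)>0$.

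\emph{Step 1: reduction to a tail.} Since $\sum(1-r_n)<\infty$, we have $r_n\to1$. Adding finitely many points to an interpolating sequence keeps it interpolating, provided the new points are distinct from the old ones and from each other. So it is enough to handle $n\ge n_0$, and to place the finitely many remaining points (with the prescribed moduli) at arguments that avoid the rest. We may also assume $1-r_n<\varepsilon$ for every $n\geq n_0$, with $\varepsilon$ as small as we like.

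\emph{Step 2: dyadic grouping and the choice of arguments.} Group the indices by dyadic scale: $I_j=\{n:2^{-j-1}<1-r_n\le 2^{-j}\}$, and let $N_j=\#I_j$. The Blaschke condition becomes $\sum_j N_j2^{-j}<\infty$, so in particular $N_j2^{-j}\to0$. Within layer $j$, spread the $N_j$ points along an arc, putting them at angular spacing about $c\,2^{-j}$ times a large constant $L$. This is possible on the circle as long as $N_jL2^{-j}\ll 2\pi$, which holds for large $j$. To avoid interaction between layers, offset the arcs of different layers so that they sit in disjoint angular windows. A clean choice is to put layer $j$ inside an arc $J_j$, with the $J_j$ pairwise disjoint, $|J_j|\approx L N_j2^{-j}$, and $\sum_j|J_j|<2\pi$. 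The last condition is guaranteed by the Blaschke sum after discarding finitely many layers.

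\emph{Step 3: verifying uniform separation.} Fix $n\in I_j$ and estimate $\sum_{k\ne n}(1-\rho_\D(z_n,z_k)^2)$. This dominates $-\log\prod_k\rho_\D$ up to constants once all the $\rho$ are bounded below, which follows from the spacing $L$. Use $1-\rho^2=\frac{(1-|z_n|^2)(1-|z_k|^2)}{|1-\overline{z_n}z_k|^2}$ together with $|1-\overline{z_n}z_k|\gtrsim \max\bigl(1-r_n,\,1-r_k,\,|\theta_n-\theta_k|\bigr)$.

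For $k$ in the same layer, the angular gaps are at least $mL2^{-j}$ for the $m$-th neighbour, so those terms sum to $\lesssim\sum_m (mL)^{-2}\lesssim L^{-2}$.

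For $k$ in other layers $i$, the angular distance is at least the distance between the arcs $J_j$ and $J_i$. The estimate needed is
\[
\sum_{i\neq j}\sum_{k\in I_i}\frac{2^{-j}2^{-i}}{\mathrm{dist}(J_j,J_i)^2+2^{-2\min(i,j)}}
\]
bounded uniformly in $j$. The simple way to ensure this is to arrange the arcs so that gaps grow. For instance, place the $J_i$ consecutively going around the circle, in order of increasing $i$, and insert gaps of size comparable to $|J_i|+2^{-i}$ times a large factor. With this arrangement the sum reduces, via a Carleson-measure-type bound, to a constant times $\sum_i N_i2^{-i}$ weighted favourably.

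\emph{Main obstacle.} The hard step is controlling the cross-layer interactions uniformly. Points of a coarse layer $i<j$ can sit angularly close to the fine layer $j$ and contribute terms of size $2^{-j}/2^{-i}$, and there may be many of them. I would handle this by checking that $\mu=\sum_n(1-r_n)\delta_{z_n}$ is a Carleson measure with small constant on each Carleson box. That is exactly what the disjoint-arc construction delivers, because a box of side $h$ contains only points from layers of scale at most $h$, and those points lie in arcs of total length at most $h$. Combined with the separation from Step 2, the standard equivalence "separated $+$ Carleson measure $\Rightarrow$ interpolating" (the Carleson/Garnett characterization) then finishes the proof. I would invoke this equivalence rather than directly bounding the product.
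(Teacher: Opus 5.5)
Your proof is correct. Its geometric core matches the construction the paper quotes from Cochran: give each point an angular slot of length comparable to $1-r_n$, keep the slots pairwise disjoint, and use the Blaschke condition (after truncation) to fit them all in a small arc. The difference lies in the bookkeeping and in how interpolation is concluded.

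The paper works point by point. After possibly enlarging and truncating the sequence so that $\sum(1-\alpha_n)<1/4$, it sets $\theta_n=2\pi\sum_{k<n}(1-\alpha_k)$. It then checks Carleson's product condition directly and passes to the subsequence corresponding to $(r_n)_n$.

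You instead coarsen to dyadic layers, spaced at $L2^{-j}$ inside disjoint arcs $J_j$ separated by large gaps. You conclude through ``separated $+$ Carleson measure $\Rightarrow$ interpolating'', which is the Carleson-measure form of Carleson's theorem.

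The paper's route is more elementary and yields explicit, monotone arguments. That is exactly what it exploits afterwards: the variant $\theta_n'=2\pi\sum_{k\ge n}(1-\alpha_k)$ and Lemma~\ref{lema-naft} feed into the proof of Theorem~\ref{Main Theorem}. Your route is more flexible but gives no such control over where the points accumulate.

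Two details need tightening. First, separation between different layers comes from the inter-arc gaps of size $\gtrsim 2^{-\min(i,j)}$, not from Step 2. Moduli in adjacent layers can be arbitrarily close, so Step 2's disjoint windows alone do not separate them. Second, you should require the whole configuration, gaps included, to lie in a proper arc of length less than $\pi$, rather than just $\sum_j|J_j|<2\pi$; this rules out wrap-around interactions.

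Finally, your ``main obstacle'' is milder than you fear. Since $|1-\overline{z_n}z_k|\gtrsim\max(1-r_k,|\theta_n-\theta_k|)$ and layer $i$ is $L2^{-i}$-spaced, each coarser layer $i<j$ contributes only $O(2^{i-j})$ to $\sum_{k\neq n}(1-\rho_\D(z_n,z_k)^2)$. The finer layers telescope thanks to the gap after $J_j$. So a direct bound on the product is equally within reach, and the Carleson-measure detour is optional.
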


In the proof of Naftalevi\v{c}'s theorem\footnote{Since the original paper from Naftalevi\v{c} seems not to be easily available, the reader is referred to \cite{Cochran1990} for a proof.}, it is shown that after possibly considering a larger sequence $(\alpha_n)_n$ that is also Blaschke and then truncating the sequence so that it satisfies $\sum_{n=1}^\infty(1-\alpha_n)<1/4$, letting 
\[
\theta_n:=2\pi\sum_{k=1}^{n-1}(1-\alpha_k),
\]
the sequence defined by $z_n:=\alpha_ne^{i\theta_n}$ satisfies \eqref{Carleson's condition}, and thus the subsequence of $(z_n)_n$ with indices corresponding to $(r_n)_n$ is interpolating for $H^\infty(\D)$. 

Since interpolating sequences for $H^\infty(\D)$ are invariant through rotations and conjugation, it is clear that if we let 
\[
\theta_n':=2\pi\sum_{k=n}^{\infty}(1-\alpha_k)\quad \text{for each } n,
\] 
then the sequence $(z_n')_n$ defined by $z_n':=\alpha_ne^{i\theta_n'}$ would also be interpolating for $H^\infty(\D)$, and moreover, we would have that $(\arg(z_n'))_n$ is a decreasing sequence of strictly positive real numbers converging to $0$. 

We will be interested in determining when the interpolating sequence $(z_n')_n$ previously defined approaches $1$ in such a way that for any horocycle tangent to $1$, $(z_n')_n$ eventually lies in that horocycle. The following lemma gives a sufficient condition for this to happen:

\begin{lemma}\label{lema-naft}
    Suppose that $(r_n)_n$ is a non-decreasing sequence in $(0,1)$ satisfying $\sum_{n=1}^\infty\sqrt{1-r_n}<\infty$ and we let $\theta_n:=2\pi\sum_{k=n}^{\infty}(1-r_k)$ for each $n$. Then the sequence defined by $z_n:=r_ne^{i\theta_n}$ satisfies $\lim_{n\rightarrow\infty}\frac{1-|z_n|^2}{|1-z_n|^2}=\infty$.
\end{lemma}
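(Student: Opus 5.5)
We will estimate $1-|z_n|^2$ from below and $|1-z_n|^2$ from above, both in terms of $\epsilon_n:=1-r_n$ and the tail $\theta_n=2\pi\sum_{k\ge n}\epsilon_k$. The numerator is $1-r_n^2=(1-r_n)(1+r_n)\geq \epsilon_n$. For the denominator we use
\[
|1-r_ne^{i\theta_n}|^2=(1-r_n)^2+2r_n(1-\cos\theta_n)\leq \epsilon_n^2+\theta_n^2 .
\]
Hence it suffices to show that $\epsilon_n/(\epsilon_n^2+\theta_n^2)\to\infty$. Since $\epsilon_n^2/\epsilon_n=\epsilon_n\to0$, this reduces to proving $\theta_n^2/\epsilon_n\to 0$, that is,
\[
\frac{\left(\sum_{k\ge n}\epsilon_k\right)^2}{\epsilon_n}\longrightarrow 0 .
\]

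Here we use both hypotheses. Because $(r_n)_n$ is non-decreasing, $(\epsilon_n)_n$ is non-increasing, so $\epsilon_k\le\epsilon_n$ for $k\ge n$. We split the tail as
\[
\sum_{k\ge n}\epsilon_k=\sum_{k\ge n}\sqrt{\epsilon_k}\,\sqrt{\epsilon_k}\le \sqrt{\epsilon_n}\sum_{k\ge n}\sqrt{\epsilon_k}.
\]
Squaring and dividing by $\epsilon_n$ gives
\[
\frac{\theta_n^2}{\epsilon_n}\le 4\pi^2\Big(\sum_{k\ge n}\sqrt{\epsilon_k}\Big)^2 .
\]
This tends to $0$ because it is the tail of the convergent series $\sum_n\sqrt{1-r_n}$. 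Combining the two estimates,
\[
\frac{1-|z_n|^2}{|1-z_n|^2}\geq \frac{1}{\epsilon_n+\theta_n^2/\epsilon_n}\to\infty,
\]
since both terms in the denominator tend to $0$.

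The only delicate point is the tail bound $\sum_{k\ge n}\epsilon_k\le \sqrt{\epsilon_n}\sum_{k\ge n}\sqrt{\epsilon_k}$. Plain summability of $\epsilon_n$ would not suffice, and this step is precisely where monotonicity and the square-root summability enter together. The remaining steps are elementary trigonometric estimates: $1-\cos\theta\le\theta^2/2$ and $r_n\le1$.
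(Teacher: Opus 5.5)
Your proof is correct and follows essentially the same route as the paper: both reduce the claim to $\theta_n^2/(1-r_n)\to 0$, then use monotonicity to get $\sum_{k\ge n}(1-r_k)\le\sqrt{1-r_n}\sum_{k\ge n}\sqrt{1-r_k}$, whose square divided by $1-r_n$ is a vanishing tail. The only difference is cosmetic. You bound the denominator directly via $1-\cos\theta\le\theta^2/2$, whereas the paper writes $|1-z_n|^2=(1-r_n)^2+4r_n\sin^2(\theta_n/2)$ and replaces $\sin(\theta_n/2)$ by $\theta_n/2$ in the limit, so your one-sided estimate is if anything slightly cleaner.
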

\begin{proof}
Since $|1-z_n|^2=1-2r_n\cos(\theta_n)+r_n^2=(1-r_n)^2+4r_n\sin^2(\frac{\theta_n}{2})$, we have that 
\[
\frac{1-|z_n|^2}{|1-z_n|^2}=\frac{1-r_n^2}{(1-r_n)^2+4r_n\sin^2(\frac{\theta_n}{2})}=\frac{1+r_n}{(1-r_n)+\frac{4r_n\sin^2(\frac{\theta_n}{2})}{1-r_n}},
\] 
and therefore $\lim_{n\rightarrow\infty}\frac{1-|z_n|^2}{|1-z_n|^2}=\infty$ holds if and only if $\lim_{n\rightarrow\infty}\frac{\sin^2(\frac{\theta_n}{2})}{1-r_n}=0$. Now, $\theta_n\rightarrow0$ implies that $\lim_{n\rightarrow\infty}\frac{\sin^2(\frac{\theta_n}{2})}{1-r_n}=\lim_{n\rightarrow\infty}\frac{\pi^2 \left(\sum_{k=n}^\infty (1-r_k)\right)^2}{1-r_n}$, and since $(r_n)_n$ is non-decreasing we have for each $n$ that $\sum_{k=n}^\infty (1-r_k)\leq \sqrt{1-r_n}\left(\sum_{k=n}^\infty \sqrt{1-r_k}\right)$, and therefore 
\[
\lim_{n\rightarrow\infty}\frac{\sin^2(\frac{\theta_n}{2})}{1-r_n}=\lim_{n\rightarrow\infty}\frac{\pi^2 \left(\sum_{k=n}^\infty (1-r_k)\right)^2}{1-r_n}\leq \lim_{n\rightarrow\infty} \left(\sum_{k=n}^\infty \sqrt{1-r_k}\right)^2=0.
\]
\end{proof}

\section{Proof of Theorem \ref{Main Theorem}}\label{Section-3}
\vspace{0.3cm}


Given a sequence $(z_n)_n$ in $\D$, we have that $\delta_{n_0}=\prod_{k\neq n_0}\rho_\D(z_{n_0},z_k)>0$ for some $n_0$ if and only if $(z_n)_n$ is a Blaschke sequence, and thus $\delta_n>0$ for every $n$ provided $\delta_{n_0}>0$ for some $n_0$. The next lemma, which will be needed for the proof of Theorem \ref{Main Theorem},  shows that the same holds for any sequence in $M_A$:

\begin{lemma}\label{lema-tec}
    Given a uniform algebra $A$ and a sequence $(x_n)_n$ in $M_A$, if $\delta_{n_0}>0$ for some $n_0$, then $\delta_n>0$ for every $n$.
\end{lemma}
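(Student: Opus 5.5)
The goal is to show that if $\delta_{n_0}=\prod_{k\neq n_0}\rho_A(x_{n_0},x_k)>0$, then for any fixed $n\neq n_0$ we also have $\delta_n=\prod_{k\neq n}\rho_A(x_n,x_k)>0$. The plan is to compare $\rho_A(x_n,x_k)$ with $\rho_A(x_{n_0},x_k)$ for all $k\neq n,n_0$ via a Möbius-type estimate, and then apply the Blaschke-type criterion for infinite products. Positivity of $\delta_{n_0}$ gives three facts: $\rho_A(x_{n_0},x_k)>0$ for every $k\neq n_0$, $\sum_{k\neq n_0}(1-\rho_A(x_{n_0},x_k))<\infty$, and $\rho_A(x_{n_0},x_k)\to 1$.

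\textbf{Step 1 ($x_n$ and $x_{n_0}$ lie in the same Gleason part).} Since $\rho_A(x_{n_0},x_k)\to1$, the point $x_k$ is not in the Gleason part of $x_{n_0}$ for $k$ large. If $x_n$ were in a different Gleason part from $x_{n_0}$, then $\rho_A(x_{n_0},x_n)=1$. We would then only know $\rho_A(x_n,x_k)>0$ but have no rate. So we split into cases.

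\textbf{Case 1: $\rho:=\rho_A(x_n,x_{n_0})<1$.} We use the strong triangle inequality for the Gleason distance, which follows from \eqref{Gleason-equality} and the corresponding inequality for $\rho_\D$:
\[
\rho_A(x_n,x_k)\geq \frac{\rho_A(x_{n_0},x_k)-\rho}{1-\rho\,\rho_A(x_{n_0},x_k)}.
\]
For the finitely many $k$ with $\rho_A(x_{n_0},x_k)\le\rho$, we need $\rho_A(x_n,x_k)>0$ separately. This holds as long as $x_k\neq x_n$; the statement implicitly assumes the points are distinct, since $\delta_n$ would otherwise be $0$ trivially, and for distinct points of $M_A$ some $f\in A$ separates them, so $\rho_A>0$. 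For the remaining $k$ the right-hand side is positive, and
\[
1-\frac{t-\rho}{1-\rho t}=\frac{(1-t)(1+\rho)}{1-\rho t}\leq \frac{1+\rho}{1-\rho}(1-t),
\]
with $t=\rho_A(x_{n_0},x_k)$. Hence $\sum_k(1-\rho_A(x_n,x_k))<\infty$, and the product of the factors, all of which lie in $(0,1]$, is positive. Alternatively, Lemma \ref{lema-blaschke} gives this directly. The single factor $k=n_0$ equals $\rho>0$.

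\textbf{Case 2: $\rho_A(x_n,x_{n_0})=1$.} This is the main obstacle. Here the strong triangle inequality gives nothing, so we need to show $\sum_k(1-\rho_A(x_n,x_k))<\infty$ by other means. The idea is to find functions that are uniformly small on the tail. By the definition of $\rho_A$ there are $f_k\in A$ with $\|f_k\|\le1$, $f_k(x_k)=0$ and $|f_k(x_{n_0})|\approx\rho_A(x_{n_0},x_k)$, which is close to $1$. We then want $|f_k(x_n)|$ to be close to $1$ as well. The plan is to use the Gleason parts structure: $x_n$ and $x_{n_0}$ in different parts means there is $g\in A$ with $\|g\|\le1$, $g(x_{n_0})=0$, and $|g(x_n)|$ as close to $1$ as we like. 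Products such as $f_k\cdot h$, or Möbius combinations $\phi\circ(\dots)$ built with the disk-algebra composition remark, could transfer closeness to $1$ from $x_{n_0}$ to $x_n$.

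If this fails, we will revisit whether the lemma implicitly uses that $\rho_A(x_k,\cdot)\to1$ uniformly in a way that yields a direct bound. For example, since $\rho_A(x_k,x_{n_0})\to1$, the points $x_k$ eventually leave the part of $x_{n_0}$. We would then try to argue that only finitely many $x_k$ lie in the part of $x_n$. If the remaining $\rho_A(x_n,x_k)$ are equal to $1$, the product is unaffected, and the finitely many others are positive. This reduces Case 2 to showing that infinitely many $x_k$ cannot accumulate inside the part of $x_n$ with $\rho_A(x_n,x_k)\not\to1$. That in turn needs some estimate tying $1-\rho_A(x_n,x_k)$ to $1-\rho_A(x_{n_0},x_k)$, which is the step I expect to be hardest.
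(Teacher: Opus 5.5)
Your Case 1 is correct and is, in substance, the paper's whole computation. The paper picks a tail with $\prod_{k\geq N}\rho_A(x_k,x_{n_0})>\rho_A(x_{n_0},x_m)$ and applies both inequalities of Lemma \ref{lema-blaschke}. You instead use the termwise bound $1-\rho_A(x_n,x_k)\leq\frac{1+\rho}{1-\rho}\bigl(1-\rho_A(x_{n_0},x_k)\bigr)$, which amounts to the same thing. You are also right that the points must be assumed distinct. One slip: $\rho_A(x_{n_0},x_k)\to1$ does not mean the $x_k$ eventually leave the Gleason part of $x_{n_0}$ (a Blaschke sequence in $\D$ stays in one part), but Case 1 never uses this.

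\textbf{The gap is Case 2, and it cannot be closed, because the statement is false there.} If $x_n$ lies in a Gleason part $P$ different from that of $x_{n_0}$, then $\rho_A(x_{n_0},x_k)=1$ for every $x_k\in P$. So $\delta_{n_0}>0$ says nothing about how the sequence sits inside $P$. There is nothing close to $1$ to transfer to $x_n$, and $P$ may contain infinitely many badly placed $x_k$, so your fallback ``only finitely many $x_k$ lie in the part of $x_n$'' fails too.

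Concretely, take $A=A(\D)\oplus A(\D)$, viewed as functions on $\overline{\D}\times\{1,2\}$. The idempotents $(1,0)$ and $(0,1)$ give $\rho_A=1$ between points of different copies. Schwarz--Pick together with a M\"obius map on the second copy gives $\rho_A\bigl((z,2),(w,2)\bigr)=\rho_\D(z,w)$ for $z,w\in\D$. Now set $x_1=(0,1)$, $x_2=(0,2)$ and $x_k=(1-\tfrac1k,2)$ for $k\geq3$. Then $\delta_1=1$ but $\delta_2=\prod_{k\geq3}(1-\tfrac1k)=0$. Using $H^\infty(\D)\oplus H^\infty(\D)$ instead gives the same phenomenon in a dual uniform algebra with all points in $M_A\cap X$.

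The paper's proof does not treat this case either. Its first sentence asserts that it suffices to consider sequences lying in a single Gleason part, and that reduction is exactly what the example breaks.

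What your Case 1 proves, for distinct points, is the correct version: if $\delta_{n_0}>0$, then $\delta_m>0$ for every $m$ with $x_m$ in the Gleason part of $x_{n_0}$. It also holds trivially for every $m$ whose part contains only finitely many terms of the sequence. This is all the proof of Theorem \ref{Main Theorem} needs. There, $\sum_n(1-\delta_n)^{1/3}<\infty$ forces $\delta_n>0$ for all large $n$. Hence every Gleason part containing infinitely many $x_k$ contains an index with positive $\delta$, and Case 1 spreads positivity over that part.

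So instead of hunting for an estimate linking $1-\rho_A(x_n,x_k)$ to $1-\rho_A(x_{n_0},x_k)$ across different parts, you should restrict the statement in this way.
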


\begin{proof}
    If there were only finitely many points in each different Gleason part the lemma would be trivial, and it is straightforward that we only need to prove the case when all the sequence lies in the same Gleason part. 
   
   Given $m\neq n_0$, the hypothesis $\delta_{n_0}>0$ implies for some $N>\max\{n_0,m\}$ that $\prod_{n=N}^\infty \rho(x_n,x_m)> \rho(x_{n_0},x_m)$, and then we have by Lemma \ref{lema-blaschke} that 
   \[
   \begin{aligned}
   \prod_{n=N}^\infty \rho(x_n,x_m) &\geq \prod_{n=N}^\infty \frac{\rho(x_{n},x_{n_0})-\rho(x_{n_0},x_m)}{1-\rho(x_{n},x_{n_0})\rho(x_{n_0},x_m)} \\
   &\geq \frac{\left(\prod_{n=N}^\infty\rho(x_{n},x_{n_0})\right)-\rho(x_{n_0},x_m)}{1-\left(\prod_{n=N}^\infty\rho(x_{n},x_{n_0})\right)\rho(x_{n_0},x_m)}>0,
   \end{aligned}
   \] 
   and thus we have 
   \[
   \delta_m=\left(\prod_{\substack{n<N\\n\neq m}}\rho(x_{n},x_m)\right)\left(\prod_{n\geq N}\rho(x_{n},x_m)\right)>0.
   \]
   
\end{proof}


We are now ready to prove the main theorem: 

\begin{proof}[Proof of Theorem \ref{Main Theorem}]
    We first observe that considering any permutation $(x_{\pi(n)})_n$ of the sequence $(x_n)_n$, if we let $\delta_n':=\prod_{k\neq \pi(n)}\rho_A(x_k,x_{\pi(n)})$ we would have $\delta_n'=\delta_{\pi(n)}$, so without loss of generality  we may assume that $(\delta_n)_n$ is a non-decreasing sequence. By Lemma \ref{lema-tec} we have that $\delta_n>0$ for every $n$, and since the sequence $(x_n)_n$ lies in $M_A\cap X$, Alaoglu's theorem implies that for each $n$ there is a function $f_n$ in $A$ with $\lVert f_n \rVert\leq1$ such that $f_n(x_n)=\delta_n$ while $f_n(x_k)=0$ for each $k\neq n$. Moreover, considering another non-decreasing sequence $(\mu_n)_n$ with $0<\mu_n<\delta_n$ and $\sum_{n=1}^\infty(1-\mu_n)^{1/3}<\infty$, we may assume that $\lVert f_n \rVert<1$ and  $f_n(x_n)=\mu_n$ for every $n$. 
    
    If we define $r_n:=1-(1-\mu_n)^{1/3}$ for every $n$, we have that the sequence $\left(\frac{\mu_n-r_n}{1-r_n\mu_n}\right)_n$ is also non-decreasing, and moreover it satisfies

    $$\sum_{n=1}^\infty\sqrt{1-\left(\frac{\mu_n-r_n}{1-r_n\mu_n}\right)} = \sum_{n=1}^\infty\sqrt{\frac{(1-\mu_n)(1+r_n)}{1-r_n\mu_n}} $$$$ \leq 2 \sum_{n=1}^\infty\sqrt{\frac{(1-\mu_n)}{(1-\mu_n)+(1-r_n)}} 
    = 2 \sum_{n=1}^\infty\sqrt{\frac{(1-\mu_n)}{(1-\mu_n)+(1-\mu_n)^{1/3}}} $$ $$ = 2 \sum_{n=1}^\infty\frac{(1-\mu_n)^{1/3}}{\sqrt{(1-\mu_n)^{2/3}+1}}<\infty,$$

    so it follows from the discussion on Naftalevi\v{c}'s theorem and Lemma \ref{lema-naft} that if we let $\sigma_n:=2\pi\sum_{k=n}^\infty \left(1-\left(\frac{\mu_k-r_k}{1-r_k\mu_k}\right)\right)$, then the sequence $(z_n)_n$ defined by $z_n:=\left(\frac{\mu_n-r_n}{1-r_n\mu_n}\right)e^{i\sigma_n}$ is interpolating for $H^\infty(\D)$ and satisfies that $\lim_{n\rightarrow\infty}\frac{1-|z_n|^2}{|1-z_n|^2}=\infty$. 

    Considering the Möbius transformations $\varphi_n(z)=\frac{z-r_ne^{i\sigma_n}}{1-r_ne^{-i\sigma_n}z}$ and the functions $h_n:=e^{i\sigma_n}f_n$, the compositions $g_n:=\varphi_n\circ h_n$ are well defined, belong to $A$, and satisfy $\lVert g_n\rVert < 1$ for each $n$. Consequently, for each positive integer $N$, the function 
    \[
    G_N:=\frac{\left(\sum_{n=1}^N\frac{1+g_n}{1-g_n}\right)-1}{1+\left(\sum_{n=1}^N\frac{1+g_n}{1-g_n}\right)},
    \] 
    also belongs to $A$ and satisfies $\lVert G_N\rVert \leq 1$. By Alaoglu's theorem the sequence $(G_N)_N$ has a subnet $(G_{N_i})_{i\in I}$ that is $w^*$-convergent to some $G\in A$ with $\lVert G \rVert\leq 1$, and since $(x_n)_n$ lies in $M_A\cap X$ we have that 
    \[
    G(x_n)=\lim_i \frac{\left(\sum_{k=1}^{N_i}\frac{1+g_k(x_n)}{1-g_k(x_n)}\right)-1}{1+\left(\sum_{k=1}^{N_i}\frac{1+g_k(x_n)}{1-g_k(x_n)}\right)}=\frac{\left(\sum_{k=1}^\infty\frac{1+g_k(x_n)}{1-g_k(x_n)}\right)-1}{1+\left(\sum_{k=1}^\infty\frac{1+g_k(x_n)}{1-g_k(x_n)}\right)}.
    \]
   
   We claim that $(G(x_n))_n$ is an interpolating sequence for $H^\infty(\D)$ such that $G(x_n)\rightarrow1$, and since the mapping $\psi:\C_+\rightarrow\D$ defined by $w\mapsto\frac{w-1}{1+w}$ is a conformal mapping, we just need to prove that the sequence $(w_n)_n$ defined by $w_n:=\sum_{k=1}^\infty\frac{1+g_k(x_n)}{1-g_k(x_n)}$ is interpolating for $H^\infty(\C_+)$. 
   
   First we observe that since the sequence $(z_n)_n$ previously defined is interpolating for $H^\infty(\D)$ and for each $n$ we have that 
   \[
   g_n(x_n)=\varphi_n(e^{i\sigma_n}\mu_n)=\frac{\mu_ne^{i\sigma_n}-r_ne^{i\sigma_n}}{1-r_n\mu_n}=\frac{\mu_n-r_n}{1-r_n\mu_n}e^{i\sigma_n}=z_n,
   \] 
   it turns out that $\left(\frac{1+g_n(x_n)}{1-g_n(x_n)}\right)_n$ is interpolating for $H^\infty(\C_+)$. Moreover, since $z_n\rightarrow1$ the claim will follow once we prove that $\lim_{n\rightarrow\infty}\rho_{\C_+}\left(w_n,\frac{1+g_n(x_n)}{1-g_n(x_n)}\right)=0$, as Lemma \ref{estab-int} would imply that $(w_n)_n$ is also interpolating for $H^\infty(\C_+)$, and by the Schwarz-Pick lemma we would have that 
   \[
   |G(x_n)- z_n|=\left|\psi(w_n)-\psi\left(\frac{1+g_n(x_n)}{1-g_n(x_n)}\right)\right|\leq 2\rho_{\C_+}\left(w_n,\frac{1+g_n(x_n)}{1-g_n(x_n)}\right)\rightarrow0,
   \] 
   and thus that $G(x_n)\rightarrow1$. 
   
   Since $\frac{1-|z_n|^2}{|1-z_n|^2}\rightarrow\infty$ and an easy computation shows that 
   \[
   \begin{aligned}
   \rho_{\C_+}\left(w_n,\frac{1+g_n(x_n)}{1-g_n(x_n)}\right) &=\left|\frac{\sum_{k\neq n}\frac{1+g_k(x_n)}{1-g_k(x_n)}}{2\text{Re}\left(\frac{1+g_n(x_n)}{1-g_n(x_n)}\right) +\sum_{k\neq n}\frac{1+g_k(x_n)}{1-g_k(x_n)}} \right| \\
   &=\left|\frac{\sum_{k\neq n}\frac{1+g_k(x_n)}{1-g_k(x_n)}}{2\frac{1-|z_n|^2}{|1-z_n|^2} +\sum_{k\neq n}\frac{1+g_k(x_n)}{1-g_k(x_n)}} \right|,
   \end{aligned}
   \] 
   we have that in order to prove $\rho_{\C_+}\left(w_n,\frac{1+g_n(x_n)}{1-g_n(x_n)}\right)\rightarrow0$ we only need to show that $\sup_n\left|\sum_{k\neq n}\frac{1+g_k(x_n)}{1-g_k(x_n)}\right|<\infty$. Now, taking into account that $g_n(x_k)=\varphi_n(0)=-r_ne^{i\sigma_n}$ for each $k\neq n$, we have that $$\left|\sum_{k\neq n}\frac{1+g_k(x_n)}{1-g_k(x_n)}\right|\leq \sum_{k=1}^\infty \left|\frac{1-r_ke^{i\sigma_k}}{1+r_ke^{i\sigma_k}}\right|,$$ and since $r_ke^{i\sigma_k}\rightarrow1$, the last series is convergent if and only if $\sum_{k=1}^\infty|1-r_ke^{i\sigma_k}|<\infty$. From the estimate 
   \[
   \begin{aligned}
   |1-r_ke^{i\sigma_k}| &=\sqrt{1-2r_k\cos\left(\frac{\sigma_k}{2}\right)+r_k^2}=\sqrt{(1-r_k)^2+4r_k\sin^2\left(\frac{\sigma_k}{2}\right)} \\
   &\leq (1-r_k)+2\sqrt{r_k}\sin\left(\frac{\sigma_k}{2}\right)=(1-\mu_k)^{1/3}+2\sqrt{r_k}\sin\left(\frac{\sigma_k}{2}\right)
   \end{aligned}
   \] 
   and the hypothesis $\sum_{n=1}^\infty(1-\mu_n)^{1/3}<\infty$, the problem is then reduced to the convergence of $\sum_{k=1}^\infty \sin\left(\frac{\sigma_k}{2}\right)$, or equivalently, to that of $\sum_{k=1}^\infty \sigma_k$. However, since $((1-\mu_n)^{1/3})_n$ is a non-increasing sequence of positive real numbers, we have that 
   \[
   \begin{aligned}
   \sum_{k=1}^\infty \sigma_k &=\sum_{k=1}^\infty\left(2\pi\sum_{n=k}^\infty\frac{(1-\mu_n)(1+r_n)}{1-r_n\mu_n}\right)=2\pi\sum_{n=1}^\infty n\frac{(1-\mu_n)(1+r_n)}{1-r_n\mu_n} \\
   &\leq 8\pi\sum_{n=1}^\infty n\frac{(1-\mu_n)}{(1-\mu_n)+(1-r_n)}\leq 8\pi\sum_{n=1}^\infty n(1-\mu_n)^{2/3} \\
   &=8\pi\sum_{k=1}^\infty\left(\sum_{n=2^k-1}^{2^{k+1}} n(1-\mu_n)^{2/3}\right)\leq 8\pi\sum_{k=1}^\infty 2^k\left( 2^{k+1} (1-\mu_{2^k-1})^{2/3}\right) \\
   &\leq 64\pi\sum_{k=1}^\infty\left( (2^{k}-1) (1-\mu_{2^k-1})^{1/3}\right)^2<\infty,
   \end{aligned}
   \] 
   and therefore we conclude that $\rho_{\C_+}\left(w_n,\frac{1+g_n(x_n)}{1-g_n(x_n)}\right)\rightarrow0$, proving the claim. 

    By Theorem \ref{Beurling-Rudin} we have that the set $(G(x_n))_n\cup \{1\}$ is interpolating for $A(\D)$, and therefore given any bounded sequence $(\alpha_n)_n\in \ell_\infty$ there is a constant $M>0$ such that if we let 
    \[
    \alpha_n^k:=\left\{\begin{array}{cc}
       \alpha_n  & \text{if } n\leq k, \\
       0  & \text{if } n> k,
    \end{array}\right.
    \] 
    then there is a function $F_k\in A(\D)$ with $F_k(G(x_n))=\alpha_n^k$ for every $n$ and $\lVert F_k \rVert<M$. Since the composition $F_k\circ G$ belongs to $A$ and satisfies $\lVert F_k\circ G \rVert<M$ for every $k$, letting $F$ be the $w^*$-limit of some subnet of $(F_k\circ G)_k$ we have that $F(x_n)=\alpha_n$ for every $n$, from where we conclude that $(x_n)_n$ is an interpolating sequence for $A$.

\end{proof}

If we consider sequences in $M_A$, it is easily verified that the same proof verbatim gives the following theorem:
\begin{theorem}\label{Main-teo-bidual}
    Given a uniform algebra $A$, if a sequence $(x_n)_n\subset M_A$ satisfies $\sum_{n=1}^\infty(1-\delta_n)^{1/3}<\infty$, then $(x_n)_n$ is interpolating for $A^{**}$.
\end{theorem}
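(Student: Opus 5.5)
The plan is to run the proof of Theorem \ref{Main Theorem} inside the dual uniform algebra $A^{**}=(A^*)^*$, checking the two places where the hypothesis $x_n\in X$ was used. The relevant facts are that $A^{**}$ is a uniform algebra under the Arens product and that $\rho_{A^{**}}(x,y)=\rho_A(x,y)$ for $x,y\in M_A$. Also, each $x\in M_A$ is a multiplicative functional on $A$, hence an element of $A^*$. Through the canonical extension it defines a $w^*$-continuous character of $A^{**}$. So $M_A\subset M_{A^{**}}\cap A^*$, where $A^*$ plays the role of the predual $X$.

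First I will form the quantities $\delta_n$ for $A^{**}$. These coincide with the $\delta_n$ computed for $A$, because the two Gleason distances agree. Hence the hypothesis $\sum_n(1-\delta_n)^{1/3}<\infty$ carries over to $A^{**}$ unchanged.

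The first use of $x_n\in X$ was the construction of peaking functions $f_n$ with $\lVert f_n\rVert\le1$, $f_n(x_n)=\delta_n$ and $f_n(x_k)=0$ for $k\ne n$. I will obtain these for $A$ itself and then pass to $A^{**}$. Fix $n$ and $N$. For finitely many points, a Blaschke-type product of the extremal functions for $\rho_A(x_n,x_k)$, $k\le N$, $k\neq n$, gives $f\in A$ with $\lVert f\rVert\le1$ and $f(x_k)=0$ for those $k$. After a rotation, $f(x_n)$ is close to $\prod_{k\le N,k\ne n}\rho_A(x_n,x_k)\ge\delta_n$. Let $N\to\infty$ and take a $w^*$-cluster point in the unit ball of $A^{**}$. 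Evaluation at each $x_k\in A^*$ is $w^*$-continuous, so the limit lies in $A^{**}$, vanishes at every $x_k$ with $k\ne n$, and has modulus at least $\delta_n$ at $x_n$. Scaling by a small factor then gives $\lVert f_n\rVert<1$ and $f_n(x_n)=\mu_n$ exactly, just as in the original proof.

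With the $f_n$ in hand, the rest of the construction is purely algebraic. It uses the Möbius maps $\varphi_n$, the functions $g_n$, the functions $G_N$, and composition with $A(\D)$ and $H^\infty(\D)$ functions of norm less than one, all carried out inside the Banach algebra $A^{**}$. The second use of $x_n\in X$ was to pass to the $w^*$-limit $G$ of the $G_N$ and then to the $w^*$-limit $F$ of the $F_k\circ G$. Here Alaoglu applies in $A^{**}$, and evaluation at $x_n\in A^*$ is $w^*$-continuous, so both limits evaluate correctly at each $x_n$. The estimates on $(w_n)_n$, the application of Lemma \ref{estab-int} and the use of Theorem \ref{Beurling-Rudin} are unchanged.

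I expect the main obstacle to be verifying that $x_n$ really is $w^*$-continuous and multiplicative on $A^{**}$, that is, that evaluating $x_n$ at an Arens product $FG$ gives $F(x_n)G(x_n)$. I would settle this with the standard Arens-product computation: $x_n$ is multiplicative on the $w^*$-dense subalgebra $A$, and the Arens product is separately $w^*$-continuous in the appropriate variable. Iterating the limits therefore yields multiplicativity on all of $A^{**}$.
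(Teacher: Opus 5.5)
Your proposal is correct and is essentially the paper's argument: the paper simply notes that the proof of Theorem \ref{Main Theorem} goes through verbatim in $A^{**}=(A^*)^*$. This works because points of $M_A$ lie in the predual $A^*$ and act as $w^*$-continuous characters of $A^{**}$. Your added details fill in what the paper leaves implicit, namely building the $f_n$ from near-extremal products in $A$ via a $w^*$-cluster point, and checking that each $x_n$ is multiplicative for the Arens product.
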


However, the last theorem may fail if we do not allow the interpolation to be done with functions in $A^{**}$. Furthermore, Theorem \ref{Main Theorem} would fail if we only required the sequence be in $M_A$. For instance, considering the dual uniform algebra $H^\infty(\D)$, R. Mortini proved in \cite{Mortini2000} that a sequence of trivial points in $\mathfrak{M}$ is interpolating if and only if it is discrete for the $w^*$-topology, so any non-discrete sequence $(x_n)_n$ of trivial points in $\mathfrak{M}$ would satisfy that $\delta_n=1$ for every $n$ while not being interpolating for $H^\infty(\D)$. Moreover,  we note that the assumption of $A$ being a uniform algebra is essential. If we consider a commutative Banach algebra with identity $B$ rather than a uniform algebra, the problem at generalizing Theorem \ref{Main Theorem} stems from the fact that even if the composition of elements from the algebra $B$ with polynomials are well defined,  the norm of the composition is  no longer equal to the norm of the polynomial as a function in the disk algebra. In fact, if the norms were to be the same, considering the function $p(z)=z^2$ we would have that $\lVert f^2 \rVert=\lVert f \rVert^2$ for any element $f$ of $B$, forcing $B$ to be a uniform algebra. 

In \cite{GalindoGamelinLindstrom2004}, the authors defined a set $E\subset M_A$ to be hyperbolically bounded provided it is contained in a finite union of Gleason balls of radii strictly smaller than $1$. From Bear's formula of the Gleason distance between two points $x,y\in M_A$, that is
\[
\rho_A(x,y)=\frac{\lVert x-y \rVert}{4-\lVert x-y \rVert^2}, 
\]
it is clear that a set $E$ is hyperbolically bounded if and only if $\sup_{x,y\in E}\lVert x-y \rVert<2$. From Theorem \ref{Main-teo-bidual} it is clear that any set $E\subset M_A$ that is not hyperbolically bounded contains an interpolating sequence for $A^{**}$, thus obtaining Theorem 5.5 from \cite{CarneColeGamelin1989}. Also, the same argument with Theorem \ref{Main Theorem} shows that any sequence in $M_A\cap X$ that is not hyperbolically bounded contains an interpolating sequence for $A=X^*$ (cf. \cite[Proposition 4.2]{GalindoLindstromMiralles2009}).

\begin{remark}
    In the proof of Theorem \ref{Main Theorem}, we constructed a function $G$ in $A$ with $\lVert G \rVert\leq1$ such that $(G(x_n))_n$ is interpolating for $H^\infty(\D)$ in order to prove that $(x_n)_n$ is interpolating for $A$. However, there are interpolating sequences for dual uniform algebras for which this is not possible. To see this, observe that if such a function $G$ exists for a sequence $(x_n)_n\subset M_A\cap X$, then by Carleson's theorem and the inequality $\rho_\D(G(x),G(y))\leq \rho_A(x,y)$ we would have that $\inf_{n}\prod_{k\neq n}\rho_A(x_n,x_k)>0$, but it was shown in \cite{BerndtssonChangLin1987} that a sequence $(x_n)_n\subset \D^2$ may be interpolating for $H^\infty(\D^2)$ and satisfies $\prod_{k\neq n}\rho_{\D^2}(x_n,x_k)=0$ for every $n$.
\end{remark}

\section{Interpolating sequences in the polydisc}\label{Section-polydisc}
\vspace{0.3cm}
In this section, we continue the study of interpolating sequences in the polydisc initiated by Berndtsson, Chang, and Lin \cite{BerndtssonChangLin1987}. They showed that any sequence $(z_n)_n$ in $\D^N$ satisfying $\delta:=\inf_n \prod_{k\neq n}\rho_{\D^N}(z_k,z_n)>0$ is interpolating for $H^\infty(\D^N)$, but also  that if $N>1$ then this condition is no longer necessary for $(z_n)_n$ to be interpolating. In fact, they constructed an example of an interpolating sequence $(p_n)_n$ in $\D^2$ for which $\prod_{k\neq n}\rho_{\D^2}(p_k,p_n)=0$ for every $n$. It is therefore natural to ask what additional condition must an interpolating sequence satisfy for it to be uniformly separated,  and it turns out that this exact condition is that the sequence of norms be Blaschke: 



\begin{theorem}
    A sequence $(z_n)_n$ in $\D^N$ satisfies 
    \[
    \delta:=\inf_n \prod_{k\neq n}\rho_{\D^N}(z_k,z_n)>0
    \]
    if and only if $(z_n)_n$ is an interpolating for $H^\infty(\D^N)$ such that $\sum_{n=1}^\infty(1-\lVert z_n\rVert)<\infty$.
\end{theorem}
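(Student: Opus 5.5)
\emph{Forward implication.} Assume $\delta>0$. By the theorem of Berndtsson, Chang and Lin recalled at the start of this section, $(z_n)_n$ is then interpolating for $H^\infty(\D^N)$. So only the Blaschke condition on the norms needs proof. Fix one index, say $n=1$, and write $w=z_1$. Since $\prod_{k\neq1}\rho_{\D^N}(z_k,w)\geq\delta>0$, the standard equivalence between convergent products and series gives
\[
\sum_{k}\bigl(1-\rho_{\D^N}(z_k,w)^2\bigr)<\infty .
\]
Now use the explicit formula $\rho_{\D^N}=\max_i\rho_\D$ together with the identity
\[
1-\rho_\D(a,b)^2=\frac{(1-|a|^2)(1-|b|^2)}{|1-\overline{a}b|^2}\geq \frac{1-|b|}{1+|b|}\,(1-|a|^2).
\]
Taking the minimum over the coordinates yields
\[
1-\rho_{\D^N}(z_k,w)^2=\min_i\bigl(1-\rho_\D(z_{k,i},w_i)^2\bigr)\geq c_w\min_i\bigl(1-|z_{k,i}|^2\bigr)=c_w\bigl(1-\lVert z_k\rVert^2\bigr),
\]
with $c_w=\min_i\frac{1-|w_i|}{1+|w_i|}>0$. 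Summing over $k$ gives $\sum_k(1-\lVert z_k\rVert)<\infty$, which finishes this direction.

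\emph{Converse implication.} Let $M$ be the interpolation constant. Interpolation already gives uniform separation: $\rho_{\D^N}(z_n,z_k)\geq 1/M$ for $n\neq k$. On the interval $[1/M^2,1]$ we have $\log t\geq -C_M(1-t)$. Hence it suffices to prove the uniform bound
\[
\sup_n\sum_{k\neq n}\bigl(1-\rho_{\D^N}(z_n,z_k)^2\bigr)<\infty ,
\]
since exponentiating then bounds $\prod_{k\neq n}\rho_{\D^N}(z_n,z_k)$ below uniformly in $n$.

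To estimate each term, for each $k$ choose a coordinate $i(k)$ with $|z_{k,i(k)}|=\lVert z_k\rVert$. Then
\[
1-\rho_{\D^N}(z_n,z_k)^2\leq\frac{(1-|z_{n,i(k)}|^2)(1-\lVert z_k\rVert^2)}{|1-\overline{z_{n,i(k)}}z_{k,i(k)}|^2}.
\]
Split the indices $k$ into the $N$ classes according to the value of $i(k)$. It is then enough to bound, for each fixed coordinate $i$, the Poisson-type sums
\[
\sum_{k:\,i(k)=i}\frac{(1-|z_{n,i}|^2)(1-|z_{k,i}|^2)}{|1-\overline{z_{n,i}}z_{k,i}|^2}
\]
uniformly in $n$. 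This is the statement that $\sum_{k:\,i(k)=i}(1-|z_{k,i}|^2)\,\delta_{z_{k,i}}$ is a Carleson measure in $\D$. The Blaschke hypothesis on the norms guarantees that this measure is finite.

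\emph{Main obstacle.} The hard step is this Carleson measure estimate: one has to extract a genuinely one-variable Carleson condition from interpolation in $\D^N$. My first attempt would use that $H^\infty(\D^N)$ is a dual algebra. Via the Varopoulos/P. Beurling argument (averaging interpolants over random sign choices), this gives functions $f_k$ with $f_k(z_j)=\delta_{jk}$ and $\sup_{z}\sum_k|f_k(z)|\leq M^2$. I would then restrict these functions to analytic discs through $z_n$ in the $i$-th coordinate direction and apply Schwarz--Pick together with a Green's-formula argument on such slices. The goal is to bound each Poisson sum by a constant depending only on $M$ and $N$. The Blaschke condition is used to discard the finitely many terms with $\lVert z_k\rVert$ bounded away from $1$ and to ensure that all the measures involved are finite. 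If this slicing argument fails, the fallback is to prove directly that the projections $(z_{k,i})_{k:\,i(k)=i}$ are interpolating for $H^\infty(\D)$ after passing to finitely many subsequences, and then invoke Carleson's theorem.
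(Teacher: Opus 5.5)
Your forward implication is correct and matches the paper. Interpolation comes from Berndtsson--Chang--Lin, and your estimate $1-\rho_\D(a,b)^2\geq\frac{1-|b|}{1+|b|}(1-|a|^2)$ supplies the detail behind the paper's remark that a single positive product forces $\sum_n(1-\lVert z_n\rVert)<\infty$.

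\textbf{The gap in your converse.} It breaks at the step you flag as the main obstacle: the one-variable Carleson estimate you reduce to is false, so neither the slicing argument nor the fallback can succeed. The bound $1-\rho_{\D^N}(z_n,z_k)^2\leq 1-\rho_\D(z_{n,i(k)},z_{k,i(k)})^2$ keeps the coordinate where $z_k$ has largest modulus. That coordinate depends only on $k$, whereas the coordinate that actually separates $z_n$ from $z_k$ depends on the pair. For instance, in $\D^2$ let $z_{j,m}=(1-4^{-j},\,1-2^{-m})$ for $j\geq 1$ and $1\leq m\leq j$.
\begin{itemize}
\item This set is part of the product of the interpolating sequences $(1-4^{-j})_j$ and $(1-2^{-m})_m$, hence interpolating for $H^\infty(\D^2)$ (P. Beurling functions in $z^1$ times interpolants in $z^2$).
\item It is even uniformly separated, and $\sum(1-\lVert z_{j,m}\rVert)=\sum_j j4^{-j}<\infty$.
\item Yet $i(k)=1$ for every $k$. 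At $z_{j,m}$, each of the other $j-1$ points $z_{j,m'}$ contributes exactly $1$ to your Poisson sum in the first coordinate, so these sums are unbounded, even though the conclusion of the theorem holds here.
\item The projected sequence repeats the value $1-4^{-j}$ exactly $j$ times, so it cannot be split into finitely many interpolating sequences either.
\end{itemize}

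\textbf{The converse is false.} No repair is possible, because the implication ``interpolating and $\sum_n(1-\lVert z_n\rVert)<\infty$ $\Rightarrow$ $\delta>0$'' is itself false. The paper's proof has a gap at the corresponding point. Dividing $f_n$ by $\frac{z^i-z_k^i}{1-\overline{z_k^i}z^i}$ gives a removable singularity only if $f_n$ vanishes on the whole hyperplane $\{z^i=z_k^i\}$, while all that is known is $f_n(z_k)=0$.

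Here is a counterexample.
\begin{itemize}
\item For $0<\tau\leq 1/2$ let $A_\tau$ consist of $0$ and $K=\lceil 1/\tau\rceil$ equally spaced points on $\{|w|=1-\tau\}$. This set is uniformly separated in $\D$ with a bound independent of $\tau$.
\item Take $\tau_m\to 0$, let $\varphi_c(w)=\frac{c-w}{1-\overline{c}w}$, and set $S_m:=\varphi_{c_m}(A_{\tau_m})$. Choose $c_m\to\T$ inductively, fast enough that $\bigcup_m S_m$ is uniformly separated and $\sum_m (K_m+1)\sum_{s\in S_m}(1-|s|)<\infty$. This is possible since each $A_{\tau_m}$ is a finite subset of $\D$.
\item Let $Z:=\bigcup_m S_m\times A_{\tau_m}$. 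It is interpolating for $H^\infty(\D^2)$ by the same P. Beurling argument, using that the $A_{\tau_m}$ have uniformly bounded interpolation constants.
\item $Z$ satisfies $\sum_{z\in Z}(1-\lVert z\rVert)<\infty$.
\item At the point $(c_m,0)\in Z$, at least $K_m^2$ points of $Z$ lie at Gleason distance exactly $1-\tau_m$, namely $(\varphi_{c_m}(w),b)$ with $|w|=|b|=1-\tau_m$. Hence its product is at most $(1-\tau_m)^{K_m^2}\leq e^{-1/\tau_m}\to 0$.
\end{itemize}
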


\begin{proof}
For each positive integer $n$ we consider the partition of $\N\setminus\{n\}$ into the sets $s_n(1), s_n(2), \dots, s_n(N)$ defined by letting $k$ be in $s_n(1)$ if $\rho_{\D^N}(z_n,z_k)=\rho_{\D}(z_n^1,z_k^1)$, and for $2\leq i\leq N$, letting $k$ be in $s_n(i)$ if $k\notin s_n(j)$ for $j<i$ and $\rho_{\D^N}(z_n,z_k)=\rho_{\D}(z_n^i,z_k^i)$. Using these partitions it is then easily seen that $\prod_{k\neq n}\rho_{\D^N}(z_k,z_n)>0$ for some $n$ if and only if $\sum_{n=1}^\infty(1-\lVert z_n\rVert)<\infty$. 

Now, if $(z_n)_n$ is interpolating, then by the Open Mapping theorem there are functions $(f_n)_n\subset H^\infty(\D^N)$ and a constant $M>0$ such that $f_n(z_k)=\delta_{n,k}$ for every pair of positive integers $k,n$ while $\lVert f_n \rVert<M$ for every $n$. Fixing a positive integer $m>1$, we then define for each $n$ the functions 
\[
\phi_n^m(z^1,\dots,z^N):=f_n(z^1,\dots,z^N)\prod_{i=1}^N\left(\prod_{\substack{k=1\\k\in s_n(i)}}^m \frac{1-\overline{z_k^i}z^i}{z^i-z_k^i} \right).
\] 
Each $\phi_n^m$ is holomorphic on $\D^N\setminus E_n$, where 
\[
E_n:=\bigcup_{i=1}^N\left(\bigcup_{\substack{k=1\\k\in s_n(i)}}^m\left(\D^{i-1}\times\{z_k^i\}\times \D^{N-i}\right)\right),
\] 
and we claim there is a holomorphic extension of $\phi_n^m$ to $\D^N$ that is bounded by $M$. In fact, if we consider $z_k^i$ for some $1\leq i\leq N$ and $k\in s_n(i)$ with $1\leq k\leq m$, then for every $(w^1,\dots,w^{i-1},z_k^i,w^{i+1},\dots,w^N)$ in $E_n$ such that no $w^j$ belongs to the set $\{z_k^j : k\in s_n(j), 1\leq k\leq m\}$, we have that $\varphi_{n,i}^m(z):=\phi_n^m(w^1,\dots,w^{i-1},z,w^{i+1},\dots,w^N)$ has removable singularities in $\{z_k^i : k\in s_n(i), 1\leq k\leq m\}$. Denoting also by $\varphi_{n,i}^m$ its holomorphic extension to $\D$, we define $\widetilde{\phi_n^m}=\phi_n^m$ outside $E_n$ and $\widetilde{\phi_n^m}(w^1,\dots,w^{i-1},z_k^i,w^{i+1},\dots,w^N):=\varphi_{n,i}^m(z_k^i)$. If we now consider a point $(w^1,\dots,w^{i-1},z_k^i,w^{i+1},\dots,w^N)$ such that there is exactly one $w^j$ belonging to $\{z_k^j : k\in s_n(j), 1\leq k\leq m\}$, then the function that maps 
\[
z\mapsto\widetilde{\phi_n^m}(w^1,\dots,w^{j-1},z,w^{j+1},\dots,w^{i-1},z_k^i,w^{i+1},\dots,w^N)
\] 
also has removable singularities in $\{z_k^j : k\in s_n(j), 1\leq k\leq m\}$, and thus we may define $\widetilde{\phi_n^m}$ in these points as the values of its holomorphic extension. Repeating the argument $N-2$ more times, we will have recursively extended the function $\phi_n^m$ to a holomorphic function $\widetilde{\phi_n^m}$ on $\D^N$. Moreover, by the Maximum Modulus Principle for $\D^N$ we have that $\lVert\widetilde{\phi_n^m}\rVert=\lim_{r\to 1}\max\{|\widetilde{\phi_n^m}(z^1,\dots,z^N)|: (z^i)_{i=1}^N\subset r\T\}$, so in particular it is $\lVert\widetilde{\phi_n^m}\rVert=\lVert f_n\rVert\leq M$. 
    
The condition $\lVert\widetilde{\phi_n^m}\rVert\leq M$ guarantees that 
\[
|f_n(z^1,\dots,z^N)|\leq M \left(\prod_{i=1}^N\left(\prod_{\substack{k=1\\k\in s_n(i)}}^m \left|\frac{z^i-z_k^i}{1-\overline{z_k^i}z^i}\right| \right)\right)\quad \text{for }\ (z^1,\dots, z^N)\in\D^N,
\] 
and in particular 
\[
1=|f_n(z_n^1,\dots,z_n^N)|\leq M \left(\prod_{i=1}^N\left(\prod_{\substack{k=1\\k\in s_n(i)}}^m \left|\frac{z_n^i-z_k^i}{1-\overline{z_k^i}z_n^i}\right| \right)\right).
\] 

Taking the limit as $m \to \infty$, the last product converges because of the hypothesis $\sum_{n=1}^\infty(1-\lVert z_n\rVert)<\infty$, and we conclude that 
\[
\prod_{\substack{k=1\\k\neq n}}^\infty \rho_{\D^N}(z_n,z_k)=\prod_{i=1}^N\left(\prod_{\substack{k=1\\k\in s_n(i)}}^\infty \left|\frac{z_n^i-z_k^i}{1-\overline{z_k^i}z_n^i}\right| \right)\geq \frac{1}{M}>0\quad \text{for every } n.
\]
\end{proof}

As a consequence, we have the following dichotomy for interpolating sequences for $H^\infty(\D^N)$ in terms of the Gleason distance:

\begin{corollary}
    If $(z_n)_n$ is an interpolating sequence for $H^\infty(\D^N)$, then either there is some $\delta>0$ such that $\inf_n \prod_{k\neq n}\rho_{\D^N}(z_k,z_n)\geq \delta $, or it is $\prod_{k\neq n}\rho_{\D^N}(z_k,z_n)=0$ for every $n$.
\end{corollary}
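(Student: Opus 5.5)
The plan is to split according to whether the sequence of norms is Blaschke, and to use the preceding theorem together with the first observation in its proof.

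\emph{Case 1: $\sum_{n=1}^\infty(1-\lVert z_n\rVert)<\infty$.} Here there is nothing new to prove. Since $(z_n)_n$ is interpolating for $H^\infty(\D^N)$ by hypothesis, the preceding theorem gives $\inf_n\prod_{k\neq n}\rho_{\D^N}(z_k,z_n)>0$ directly. Concretely, its proof yields the explicit bound $\prod_{k\neq n}\rho_{\D^N}(z_n,z_k)\geq 1/M$, where $M$ bounds the norms of the functions $f_n$ with $f_n(z_k)=\delta_{n,k}$. So we may take $\delta=1/M$.

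\emph{Case 2: $\sum_{n=1}^\infty(1-\lVert z_n\rVert)=\infty$.} I will use the equivalence stated at the start of the theorem's proof: for a given $n$, $\prod_{k\neq n}\rho_{\D^N}(z_k,z_n)>0$ holds if and only if the norms are Blaschke. Since they are not, the product vanishes for every $n$.

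For completeness I would justify the direction actually needed, namely that a positive product for one $n$ forces the Blaschke condition, in a way that avoids the partitions $s_n(i)$. Use the first inequality of Lemma \ref{lema-blaschke} in the spirit of Lemma \ref{lema-tec}, but with the base point $0$. If $\prod_{k\neq n_0}\rho_{\D^N}(z_{n_0},z_k)>0$, then Lemma \ref{lema-tec}, applied in the uniform algebra $H^\infty(\D^N)$ to the sequence with the origin adjoined, shows that $\prod_{k}\rho_{\D^N}(0,z_k)>0$. Adjoining the point changes no other product. Now $\rho_{\D^N}(0,z_k)=\max_i|z_k^i|=\lVert z_k\rVert$, and $\prod_k\lVert z_k\rVert>0$ is equivalent to $\sum_k(1-\lVert z_k\rVert)<\infty$. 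This contradicts the hypothesis of Case 2.

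The only delicate point is the use of Lemma \ref{lema-tec}, which requires all points to lie in a single Gleason part. This holds, since $\D^N$ is a single Gleason part for $H^\infty(\D^N)$. If the origin happens to coincide with some $z_k$, we can instead base the argument at any point of $\D^N$ not in the sequence, using the automorphism invariance of $\rho_{\D^N}$.
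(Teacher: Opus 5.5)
Your reduction matches the paper's: the corollary is read off from the preceding theorem by splitting on whether $\sum_n(1-\lVert z_n\rVert)$ converges. Your Case 2 is sound. Adjoining a base point outside the sequence and applying Lemma \ref{lema-tec} works, since the points are distinct and $\D^N$ is a single Gleason part; more directly, $1-\rho_{\D^N}(z_{n_0},z_k)\geq\frac14(1-\lVert z_{n_0}\rVert)(1-\lVert z_k\rVert)$. The gap is Case 1. It rests entirely on the bound $\prod_{k\neq n}\rho_{\D^N}(z_n,z_k)\geq 1/M$ from the theorem's proof, and for $N\geq 2$ that bound is false. The function $\phi_n^m$ is built by dividing $f_n$ by $z^i-z_k^i$. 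That would require $f_n$ to vanish on the whole slice $\{z^i=z_k^i\}$, but $f_n$ only vanishes at the single point $z_k$, so the singularities are not removable. Three points already break the inequality. Let $\varphi$ be the disc automorphism with $\varphi(a/2)=0$ and $f(z)=\varphi\bigl((z^1+z^2)/2\bigr)$. Then $f$ has norm $1$ and vanishes at $(a,0)$ and $(0,a)$. Yet $|f(0,0)|=|a|/2>|a|^2=\rho_{\D^2}\bigl((0,0),(a,0)\bigr)\,\rho_{\D^2}\bigl((0,0),(0,a)\bigr)$ whenever $0<|a|<1/2$.

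In fact the corollary itself fails for $N=2$, so Case 1 cannot be repaired; for $N=1$ it holds trivially by Carleson's theorem. Here is a counterexample. Work in $\C_+$ and pass to $\D$ by the Cayley map $C(w)=(w-i)/(w+i)$ in each variable. Set $\eta_j=8^{-j}$, $L_j=2^j$, $x_1=0$, $x_{j+1}=x_j+2\cdot 4^{-j}$, and $A_j=\{x_j+k\eta_j+i\eta_j:1\leq k\leq L_j\}$. Inside a row the pseudohyperbolic distances are $|k-l|/\sqrt{(k-l)^2+4}$. The rows are separated by gaps $4^{-j}\gg\eta_j$, so for a point of $A_j$ the sum of $1-\rho^2$ over the other rows is $O(2^{-j})$. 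Hence $A=\bigcup_j A_j$ is uniformly separated. Products $h_a(z^1)h_b(z^2)$ of P.~Beurling functions for $A$ then show that $A\times A$, and therefore $Z=\bigcup_j A_j\times A_j$, is interpolating for $H^\infty(\D^2)$. Since $1-|C(w)|^2\leq 4\,\mathrm{Im}\,w$, we get $\sum_{z\in Z}(1-\lVert z\rVert)\leq 4\sum_j L_j^2\eta_j=4\sum_j 2^{-j}<\infty$, so no product $\prod_{k\neq n}\rho_{\D^2}(z_n,z_k)$ vanishes. However, at the centre of $A_j\times A_j$ there are $8m$ points at Gleason distance $m/\sqrt{m^2+4}$ for each $1\leq m<2^{j-1}$. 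The product there is therefore at most $\prod_{1\leq m<2^{j-1}}(1+4/m^2)^{-4m}$, which tends to $0$ because $\sum_m m\log(1+4/m^2)=\infty$. Thus $\inf_n\prod_{k\neq n}\rho_{\D^2}(z_n,z_k)=0$ while every product is positive. This contradicts both alternatives of the corollary, and also the nontrivial direction of the preceding theorem on which your Case 1 relies.
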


Observe that considering a sequence $(z_n)_n$ in $B_{c_0}$, we could similarly define a partition of $\N\setminus\{n\}$ into the sets $(s_n(i))_{i=1}^\infty$ so that 
\[
\prod_{k\neq n}\rho_{c_0}(z_k,z_n)=\prod_{i=1}^\infty\left(\prod_{\substack{k=1\\k\in s_n(i)}}^\infty \left|\frac{z_n^i-z_k^i}{1-\overline{z_k^i}z_n^i}\right| \right).
\] 
Moreover, if $(z_n)_n$ was interpolating for $H^\infty(B_{c_0})$ and $(f_n)_n$ were functions in $H^\infty(B_{c_0})$ with $f_n(z_k)=\delta_{n,k}$ and $\lVert f_n \rVert<M$ for every $n$, defining for each $N,m>1$ the function 
\[
\phi_n^{N,m}(z^1,\dots,z^N):=f_n(z^1,\dots,z^N,0,0,\dots)\prod_{i=1}^N\left(\prod_{\substack{k=1\\k\in s_n(i)}}^m \frac{1-\overline{z_k^i}z^i}{z^i-z_k^i} \right),
\] 
we would similarly have that each $\phi_n^{N,m}$ has a holomorphic extension $\widetilde{\phi_n^{N,m}}$ that belongs to $H^\infty(\D^N)$ and is bounded by $M$. We would then have for each $m>1$ that 
\[
1=|f_n(z_n)|=\lim_{N\to\infty}f_n(z^1,\dots,z^N,0,0,\dots)\leq M \left(\prod_{i=1}^\infty\left(\prod_{\substack{k=1\\k\in s_n(i)}}^m \left|\frac{z_n^i-z_k^i}{1-\overline{z_k^i}z_n^i}\right| \right)\right),
\] 
so if the sequence $(z_n)_n$  was assumed to satisfy $\sum_{n=1}^\infty(1-\lVert z_n\rVert)<\infty$, considering the limit as $m \to \infty$ we would have that $\prod_{k\neq n}\rho_{c_0}(z_k,z_n)\geq \frac{1}{M}>0$ for every $n$. In particular, we have just proved the following: 


\begin{theorem}\label{Teo-Bc0}
    If $(z_n)_n$ is an interpolating sequence for $H^\infty(B_{c_0})$ such that $\sum_{n=1}^\infty(1-\lVert z_n\rVert)<\infty$, then 
    \[
    \delta:=\inf_n \prod_{k\neq n}\rho_{c_0}(z_k,z_n)>0.
    \]
\end{theorem}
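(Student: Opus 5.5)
The plan is to mimic the polydisc argument, reducing to finitely many coordinates by restriction and then passing to limits. Since $(z_n)_n$ is interpolating for $H^\infty(B_{c_0})$, the Open Mapping theorem gives a constant $M>0$ and functions $f_n\in H^\infty(B_{c_0})$ with $f_n(z_k)=\delta_{n,k}$ and $\lVert f_n\rVert<M$. Since $\rho_{c_0}(z_n,z_k)=\sup_i\rho_\D(z_n^i,z_k^i)$ by \cite{AronGalindoLindstrom2003}, and $z_n\in c_0$ means the sup is attained, we partition $\N\setminus\{n\}$ into sets $s_n(i)$, $i\in\N$, exactly as in the finite-dimensional case: $k\in s_n(i)$ if $i$ is the first coordinate attaining the maximum. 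This gives the factorization
\[
\prod_{k\neq n}\rho_{c_0}(z_k,z_n)=\prod_{i=1}^\infty\prod_{k\in s_n(i)}\left|\frac{z_n^i-z_k^i}{1-\overline{z_k^i}z_n^i}\right|,
\]
which we will use once we check convergence. Convergence comes from the Blaschke hypothesis $\sum_k(1-\lVert z_k\rVert)<\infty$, since $1-\rho_{c_0}(z_n,z_k)\lesssim_n 1-\lVert z_k\rVert$, as in the first paragraph of the previous proof.

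Next we fix $n$, $m$ and $N$. We restrict $f_n$ to the slice $\D^N\times\{0\}$ and divide by the finite Blaschke factors, obtaining $\phi_n^{N,m}$ as displayed above. The Blaschke factors we divide by are those in coordinates $i\le N$ with $k\in s_n(i)$ and $k\le m$. The difficulty is that $f_n(\cdot,0,\dots)$ need not vanish at $(z_k^1,\dots,z_k^N)$, because the truncation of $z_k$ is not $z_k$. To fix this, I would not truncate to zero but freeze the tail: set $F(w)=f_n(w^1,\dots,w^N,z_k^{N+1},\dots)$. This does not work uniformly in $k$ either. The cleaner route is to take $N$ at least the largest coordinate index $i$ used by the indices $k\le m$, and to use the function $w\mapsto f_n(w^1,\dots,w^N,z_n^{N+1},z_n^{N+2},\dots)$. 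The zeros we need are $z_k$ for $k\le m$ with $k\in s_n(i)$, $i\le N$.

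In fact the simplest tool is a one-variable-at-a-time argument. For $k\in s_n(i)$, the function vanishes on the hyperplane $\{w^i=z_k^i\}$ only if it vanishes at all such points, which fails. So the removable-singularity extension requires vanishing on the whole hyperplane. This is the main obstacle, and the polydisc proof glosses over it too. The fix I would try first is to replace $f_n$ by the product over $i$ of one-variable interpolating data along each coordinate: use Lemma \ref{lema-blaschke}-type estimates together with the fact that $\rho_{c_0}(z_n,z_k)=\rho_\D(z_n^i,z_k^i)$ for $k\in s_n(i)$. Concretely, composing $f_n$ with the map $\lambda\mapsto$ the point of $B_{c_0}$ on the analytic disc through $z_n$ adapted to coordinate $i$ turns the problem into a one-variable estimate. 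Since $N$ is finite, these estimates are then combined by the maximum principle on $\D^N$, giving $\lVert\widetilde{\phi_n^{N,m}}\rVert\le M$.

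Granting the bound $\lVert\widetilde{\phi_n^{N,m}}\rVert\le M$, we evaluate at the truncation of $z_n$ and obtain $|f_n(z_n^1,\dots,z_n^N,0,\dots)|\le M\prod_{i\le N}\prod_{k\le m,\,k\in s_n(i)}\rho_\D(z_n^i,z_k^i)$. We then let $N\to\infty$. Truncations converge in norm in $c_0$, and $f_n$ is continuous on $B_{c_0}$, so the left side tends to $f_n(z_n)=1$. We finally let $m\to\infty$ using the convergence of the factorization. This gives $\prod_{k\neq n}\rho_{c_0}(z_k,z_n)\ge 1/M$ for every $n$, so $\delta\ge 1/M>0$.
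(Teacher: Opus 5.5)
\textbf{There is a genuine gap, and it cannot be closed.} Your factorization of the product and the final limits in $N$ and $m$ are fine. But everything rests on the bound $\lVert\widetilde{\phi_n^{N,m}}\rVert\le M$, i.e.\ on
\[
|f_n(w)|\le M\prod_{i\le N}\prod_{k\le m,\,k\in s_n(i)}\rho_\D(w^i,z_k^i)\quad\text{on } \D^N,
\]
and you only grant it. Your diagnosis is correct, and it applies verbatim to the paper's own proof, which asserts the removable-singularity extension without justification. The function $f_n$ vanishes at the points $z_k$ (and $f_n(\cdot,0,\dots)$ not even at their truncations). It does not vanish on the hyperplanes $\{w^i=z_k^i\}$, so the quotient has genuine poles there. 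For example, $f(w)=1-2w^1+w^2$ has $f(0,0)=1$ and $f(1/2,0)=0$, but $f(1/2,1/2)=1/2$.

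Your proposed repair does not help either:
\begin{itemize}
\item A disc $\lambda\mapsto(z_n^1,\dots,\lambda,\dots)$ through $z_n$ in direction $i$ contains none of the $z_k$, so $f_n$ has no zeros on it.
\item A single disc through $z_n$ and all $z_k$, $k\in s_n(i)$, with $i$-th coordinate $\lambda$, needs holomorphic self-maps of $\D$ sending $z_k^i\mapsto z_k^j$ for all these $k$ simultaneously. That is a Nevanlinna--Pick problem, generally unsolvable, since $s_n(i)$ controls only distances to $z_n$, not distances among the $z_k$.
\item Two-point discs give only $\rho_{c_0}(z_n,z_k)\ge 1/M$, never the product.
\end{itemize}

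\textbf{The statement itself is false.} A counterexample already exists for sequences supported on two coordinates. Work in $\mathbb{H}^2$, with $\mathbb{H}$ the upper half-plane and $\rho(z,w)=|z-w|/|z-\bar w|$.
\begin{enumerate}
\item The grid $\{(j+i,k+i)\}_{j,k\in\mathbb{Z}}$ is interpolating for $H^\infty(\mathbb{H}^2)$: use $\sum_{j,k}\alpha_{jk}g_j(z)g_k(w)$ with P.~Beurling's functions $g_j$ for the uniformly separated sequence $\{j+i\}_{j\in\mathbb{Z}}$. Hence every block $F_L=\{(j+i,k+i):1\le j,k\le L\}$ has interpolation constant at most a fixed $M_0$.
\item Since $1-\rho(j+i,j'+i)\asymp|j-j'|^{-2}$ and $\rho_{\mathbb{H}^2}$ is the coordinatewise maximum, a central point $p_L$ of $F_L$ satisfies $\sum_{q\in F_L\setminus\{p_L\}}(1-\rho_{\mathbb{H}^2}(p_L,q))\gtrsim\sum_{D<L/2}D\cdot D^{-2}\gtrsim\log L$. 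So $\prod_{q\in F_L\setminus\{p_L\}}\rho_{\mathbb{H}^2}(p_L,q)\to 0$.
\item Put $E_L=\Phi_L(F_L)$ with the automorphism $\Phi_L(z,w)=(\lambda_Lz+t_L,\lambda_Lw)$ and $\lambda_L=2^{-L}L^{-2}$. Choose $t_L$ recursively so large that the first-coordinate clusters $K_L=\{\lambda_L(j+i)+t_L\}_{j=1}^L$ form a uniformly separated sequence $\bigcup_LK_L$ in $\mathbb{H}$. This is possible because far-apart clusters contribute factors arbitrarily close to $1$.
\item With Beurling functions $g_\kappa$ for $\bigcup_LK_L$, set $G_L=\sum_{\kappa\in K_L}g_\kappa$, so $\sum_L|G_L|\le C$. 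Then $f=\sum_LG_L(z)h_L(z,w)$ interpolates any $\alpha$ on $\Lambda=\bigcup_LE_L$ with $\lVert f\rVert\le CM_0\lVert\alpha\rVert_\infty$, where $h_L$ interpolates $\alpha$ on $E_L$ with $\lVert h_L\rVert\le M_0\lVert\alpha\rVert_\infty$.
\item After a coordinatewise Cayley transform, $\Lambda$ is interpolating in $\D^2$ with $\sum(1-\lVert z\rVert)\le 4\sum_LL^2\lambda_L<\infty$, yet $\inf_n\prod_{k\neq n}\rho_{\D^2}(z_n,z_k)=0$ by step 2.
\item Embed via $(z^1,z^2)\mapsto(z^1,z^2,0,\dots)$. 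Together with the projection onto the first two coordinates, this preserves interpolation, norms and Gleason distances, giving a counterexample in $B_{c_0}$.
\end{enumerate}
The same example also refutes the forward implication of the paper's polydisc theorem and its dichotomy corollary.
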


\section{Interpolating sequences for $\mathscr{H}^{\infty}$}\label{Section-4}
\vspace{0.3cm}
It was shown in \cite{seip2009interpolation} that a bounded sequence in $\C_+$ is interpolating for $\mathscr{H}^{\infty}$ whenever it is interpolating for $H^\infty(\C_+)$, and thus the geometry of the bounded interpolating sequences for $\mathscr{H}^{\infty}$ is completely determined by Carleson's theorem. However, a general characterization of the unbounded interpolating sequences for $\mathscr{H}^{\infty}$ appears to be a difficult problem. 

Since for each sequence $(s_n)_n$ in $\C_+$ that is interpolating for $\mathscr{H}^{\infty}$ the sequence $(x_n)_n$ defined by $x_n:=(p_m^{-s_n})_{m}$ is interpolating for $H^\infty(B_{c_0})$, we have that a necessary condition for $(s_n)_n$ to be interpolating for $\mathscr{H}^{\infty}$ is that $\text{Re}(s_n)\to 0$. In fact, it was proven in \cite{GalindoLindstromMiralles2009} that every interpolating sequence $(x_n)_n$ for $H^\infty(B_{c_0})$ satisfies $\lVert x_n\rVert\to 1$, and thus since $\lVert (p_m^{-s_n})_{m}\rVert=2^{-\text{Re}(s_n)}$, we must have $\text{Re}(s_n)\to 0$\footnote{This necessary condition can also be deduced either from \cite{seip2009interpolation}, or from the description of $\rho_{\mathscr{H}^{\infty}}$ and the fact that interpolating sequences are separated for the Gleason distance.}. In particular, if we consider for instance the sequence $s_n:=1+in$, we have that $(s_n)_n$ is interpolating for $H^\infty(\C_+)$ but not $\mathscr{H}^{\infty}$. 

Looking for necessary conditions on unbounded interpolating sequences for $\mathscr{H}^{\infty}$, we first observe that since for every $0\leq x\leq 1$ it is $\frac{x}{2}\leq 1-2^{-x}\leq (\log2) x$, we have that $\sum_{n=1}^\infty(1-\lVert (p_m^{-s_n})_{m}\rVert)<\infty$ if and only if $\sum_{n=1}^\infty \text{Re}(s_n)<\infty$, and therefore the following corollary is a straightforward consequence of Theorem \ref{Teo-Bc0}:

\begin{corollary}
    If $(s_n)_n\subset\C_+$ is an interpolating sequence for $\mathscr{H}^{\infty}$ satisfying $\sum_{n=1}^\infty \text{Re}(s_n)<\infty$, then 
    \[
    \delta:=\inf_n \prod_{k\neq n}\rho_{\mathscr{H}^{\infty}}(s_k,s_n)>0.
    \]
\end{corollary}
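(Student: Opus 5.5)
The plan is to transfer everything to $H^\infty(B_{c_0})$ through Bohr's identification and then apply Theorem \ref{Teo-Bc0}. Let $(s_n)_n\subset\C_+$ be interpolating for $\mathscr{H}^\infty$ with $\sum_n \text{Re}(s_n)<\infty$, and put $x_n:=(p_m^{-s_n})_m$. These points lie in $B_{c_0}$: the coordinates tend to $0$ because $|p_m^{-s_n}|=p_m^{-\text{Re}(s_n)}\to 0$ as $m\to\infty$, and $\lVert x_n\rVert=2^{-\text{Re}(s_n)}<1$.

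First, $(x_n)_n$ is interpolating for $H^\infty(B_{c_0})$. By \cite{HedenmalmLindqvistSeip1997}, Bohr's map $\mathscr{H}^\infty\to H^\infty(B_{c_0})$ is an isometric isomorphism, and under it the value of $f$ at $s$ equals the value of its image $F$ at $(p_m^{-s})_m$. Hence, given $(\alpha_n)_n\in\ell_\infty$, choose $f\in\mathscr{H}^\infty$ with $f(s_n)=\alpha_n$. The corresponding $F$ satisfies $F(x_n)=\alpha_n$, so $(x_n)_n$ is interpolating. (This is the same observation the paper already uses just before the corollary.)

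Second, the norm sequence is Blaschke. The inequality $1-2^{-x}\le(\log 2)x$ for $0\le x\le 1$ covers all but finitely many $n$, since $\text{Re}(s_n)\to0$. The finitely many remaining terms are each at most $1$. Therefore $\sum_n(1-\lVert x_n\rVert)=\sum_n(1-2^{-\text{Re}(s_n)})<\infty$.

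Theorem \ref{Teo-Bc0} then gives $\inf_n\prod_{k\ne n}\rho_{c_0}(x_k,x_n)>0$.

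Finally, we translate back to $\mathscr{H}^\infty$. Because the identification is an isometric algebra isomorphism that intertwines point evaluations, $\rho_{\mathscr{H}^\infty}(s_k,s_n)=\rho_{c_0}(x_k,x_n)$; this is the identity established in the subsection on the Gleason distance for $\mathscr{H}^\infty$. Substituting it into the bound yields the claim.

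The only points needing care are membership of the $x_n$ in $B_{c_0}$ and the compatibility of point evaluations. Both are routine, so no step here is a genuine obstacle; all the real work is in Theorem \ref{Teo-Bc0}.
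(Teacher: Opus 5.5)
Your proposal is correct and matches the paper's argument: the paper also passes to $x_n=(p_m^{-s_n})_m$, uses $1-2^{-x}\le(\log 2)x$ to get $\sum_n(1-\lVert x_n\rVert)<\infty$, applies Theorem \ref{Teo-Bc0}, and transfers back via $\rho_{\mathscr{H}^\infty}(s_k,s_n)=\rho_{c_0}(x_k,x_n)$. Setting aside finitely many $n$ is harmless but unnecessary, since $1-2^{-x}\le(\log 2)x$ holds for every $x\ge 0$.
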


We will now use Theorem \ref{Main Theorem} to show a general procedure of constructing interpolating sequences for $\mathscr{H}^{\infty}$. We start by considering a sequence of integers $(k_n)_n$ and a real number $\sigma_1>0$, and then we define recursively a sequence $(\sigma_n)_n$ of positive real numbers such that $\rho_\D(2^{-\sigma_n},2^{-\sigma_k})\geq 1-\frac{1}{(nk)^4}$ for every $n>k$. 

Letting $s_n:=\sigma_n+i\frac{2\pi k_n}{\log2}$ for each $n$, we claim that $(s_n)_n$ is interpolating for $\mathscr{H}^{\infty}$. In fact, by \eqref{Gleason} we have that for each $n\neq k$ it is 
\[
\rho_{\mathscr{H}^{\infty}}(s_n,s_k)\geq\rho_\D(2^{-s_n},2^{-s_k})=\rho_\D(2^{-\sigma_n},2^{-\sigma_k})\geq 1-\frac{1}{(nk)^4},
\] 
so if we let $\delta_n:=\prod_{k\neq n}\rho_{\mathscr{H}^{\infty}}(s_n,s_k)$, we have by Lemma \ref{lema-blaschke} that 
\[
\delta_n\geq \prod_{k\neq n}\left(1-\frac{1}{(nk)^4}\right)\geq 1- \sum_{k\neq n} \frac{1}{(nk)^4}.
\] 
Therefore, we have that 
\[
\sum_{n=1}^\infty(1-\delta_n)^{1/3}\leq \sum_{n=1}^\infty \left(\sum_{k\neq n} \frac{1}{(nk)^4}\right)^{1/3} \leq \left(\frac{\pi^4}{90}\right)^{1/3}\sum_{n=1}^\infty\frac{1}{n^{4/3}}<\infty,
\] 
and we conclude by Theorem \ref{Main Theorem} that $(s_n)_n$ is interpolating for $\mathscr{H}^{\infty}$.
\vspace{0.3cm}
\subsection{$\mathscr{H}^{\infty}$ versus $H^\infty(\C_+)$ interpolation} 
\vspace{0.3cm}
Whether the interpolating sequences for $H^\infty(\C_+)$ and $\mathscr{H}^{\infty}$ coincide for any larger class than that of the bounded sequences in $\C_+$ has not been addressed yet. We will now show that for any sequence of positive real numbers $(\sigma_n)_n$ convergent to $0$ such that $\limsup_n\frac{\sigma_{n+1}}{\sigma_n}=1$, there is a sequence $(s_n)_n$ with $\text{Re} (s_n)=\sigma_n$ that is interpolating for $H^\infty(\C_+)$ but is not interpolating for $\mathscr{H}^{\infty}$. A crucial result for our construction that we now recall is Kronecker's theorem on Diophantine approximation \cite[Ch. XXIII]{HardyWright2008}, which as observed by Bohr implies that for every $k$, the mapping $s\mapsto(2^{-s},\dots, p_k^{-s})$ from $\R^+$ to $\T^k$ has dense range: 

\begin{theorem}[Kronecker]
    If $\vartheta_1,\dots,\vartheta_k$ are linearly independent real numbers and we are given any real numbers $\alpha_1,\dots,\alpha_k$, and positive real numbers $T$ and $\varepsilon$, then there are integers $p_1,\dots, p_k$ and a real number $t>T$ such that 
    \[
    |t\vartheta_i-p_i-\alpha_i|<\varepsilon \quad \text{for } i=1,\dots,k.
    \]
\end{theorem}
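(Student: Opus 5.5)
The statement is Kronecker's theorem, which the paper cites from Hardy--Wright. The hypothesis that must be meant is that $\vartheta_1,\dots,\vartheta_k,1$ are linearly independent over $\mathbb{Q}$. If only $\vartheta_1,\dots,\vartheta_k$ are independent, the result fails, for instance with $\vartheta_1=1$. I will prove it under the standard hypothesis by showing that the orbit $\{t(\vartheta_1,\dots,\vartheta_k)\bmod \mathbb{Z}^k : t>T\}$ is dense in the torus $\mathbb{R}^k/\mathbb{Z}^k$. The plan uses Weyl equidistribution for the continuous flow, with Fourier analysis on the torus.

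\emph{Step 1.} For each nonzero $m\in\mathbb{Z}^k$, I will show that
\[
\frac1L\int_T^{T+L} e^{2\pi i t\, m\cdot\vartheta}\,dt\to 0 \quad \text{as } L\to\infty.
\]
This holds because $m\cdot\vartheta\neq 0$, and the integral is bounded by $1/(\pi L|m\cdot\vartheta|)$. For the flow, only $m\cdot\vartheta\neq0$ is needed, so independence of $\vartheta_1,\dots,\vartheta_k$ over $\mathbb{Q}$ suffices here. The stronger hypothesis including $1$ is what one needs if $t$ is restricted to integers. I will note that the flow version already yields the stated conclusion, since $t$ is allowed to be real.

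\emph{Step 2.} By linearity, Step 1 gives
\[
\frac1L\int_T^{T+L} P(t\vartheta)\,dt\to\int_{\mathbb{T}^k}P
\]
for every trigonometric polynomial $P$. Stone--Weierstrass then extends this to every continuous function on $\mathbb{T}^k$.

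\emph{Step 3.} Fix the target point $\alpha=(\alpha_1,\dots,\alpha_k)$ and $\varepsilon>0$. I take a continuous bump $\phi\geq0$ supported in the $\varepsilon$-neighbourhood of $\alpha$ modulo $\mathbb{Z}^k$, with $\int\phi>0$. By Step 2, the averages of $\phi(t\vartheta)$ over $[T,T+L]$ are positive for large $L$. Hence some $t>T$ has $t\vartheta$ within $\varepsilon$ of $\alpha$ modulo $\mathbb{Z}^k$. This gives integers $p_i$ with $|t\vartheta_i-p_i-\alpha_i|<\varepsilon$ for each $i$.

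The main subtlety is Step 2's approximation argument, which is routine. Since the paper only invokes this for $\vartheta_i=\log p_i$, I will also record that the $\log p_i$ are $\mathbb{Q}$-independent by unique factorization. That is exactly the hypothesis Step 1 uses, which yields Bohr's density of $s\mapsto(2^{-s},\dots,p_k^{-s})$.
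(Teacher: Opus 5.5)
Your Steps 1--3 are correct and prove the statement exactly as written. Step 1 only uses $m\cdot\vartheta\neq 0$ for $m\in\mathbb{Z}^k\setminus\{0\}$, i.e.\ $\mathbb{Q}$-independence of $\vartheta_1,\dots,\vartheta_k$ alone. Your opening claim, however, is false and should be deleted. Because $t$ ranges over the reals, independence of the $\vartheta_i$ alone is the right hypothesis; this is the continuous form of Kronecker's theorem in Hardy--Wright, Ch.~XXIII, which is the form the paper quotes. The choice $\vartheta_1=1$ is no counterexample: take $t=p_1+\alpha_1$ with $p_1\in\mathbb{Z}$, $p_1>T-\alpha_1$, and then $|t\vartheta_1-p_1-\alpha_1|=0$. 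Independence of $\vartheta_1,\dots,\vartheta_k,1$ is the hypothesis of the discrete version ($t\in\mathbb{Z}$), and only there does $\vartheta_1=1$ fail. As written, your proposal first rejects the stated hypothesis and then, correctly, uses only that hypothesis in Step 1; state it from the outset. Two small points for Step 3. Take the neighbourhood with respect to the coordinatewise distance to $\alpha+\mathbb{Z}^k$, so the conclusion is literally $|t\vartheta_i-p_i-\alpha_i|<\varepsilon$ for each $i$. Also note that a positive integral over $[T,T+L]$ gives $\phi(t\vartheta)>0$ on a set of positive measure, hence for some $t>T$ strictly.

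The paper itself gives no proof; it cites Hardy--Wright. It applies the result with $\vartheta_i=(\log p_i)/(2\pi)$ and $\alpha_i=0$ to find $\alpha_{n+1}>T_n$ with $|e^{-i\alpha_{n+1}\log p}-1|<\varepsilon$ for the finitely many primes $p\le k_n$. Your remark that the $\log p_i$ are $\mathbb{Q}$-independent by unique factorization is exactly what that application needs. Among the proofs in Hardy--Wright, yours is closest to Bohr's. He shows that $|1+\sum_{m=1}^k e^{2\pi i(t\vartheta_m-\alpha_m)}|$ comes arbitrarily close to $k+1$ for arbitrarily large $t$. To do so, he computes the mean of its $2p$-th power using the same orthogonality as your Step 1 (the frequencies $\sum_m n_m\vartheta_m$ are distinct by independence). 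He then compares it with the bound $(k+1-\delta)^{2p}$ that would hold if the theorem failed, and lets $p\to\infty$. You replace this high-power device with Stone--Weierstrass and a bump function. That costs a density theorem on $\mathbb{T}^k$, but it yields the stronger conclusion that the orbit is equidistributed, not merely dense. The other proofs in Hardy--Wright (Lettenmeyer's and Estermann's) are elementary and avoid Fourier analysis altogether.
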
 

Consider then a decreasing sequence of positive real numbers $(\sigma_n)_n$ that tends to $0$ and satisfies $\limsup_n\frac{\sigma_{n+1}}{\sigma_n}=1$, and define $s_1:=\sigma_1$. Since $\sup_{\alpha\in \R}\rho_\D(p^{-(\sigma_2+i\alpha)},p^{-\sigma_1})\to 0$ as $p\to\infty$, there is a positive integer $k_1$ such that for every prime number $p>k_1$ and every real number $\alpha$ it is 
\[
\rho_\D(p^{-(\sigma_2+i\alpha)},p^{-\sigma_1})<\frac{\rho_\D(2^{-\sigma_2},2^{-\sigma_1})}{2}.
\] 
Moreover, since the pseudohyperbolic distance in $\D$ is continuous, there is an $\varepsilon>0$ such that $|1-e^{i\alpha}|<\varepsilon$ implies that 
\[
|\rho_\D(p^{-(\sigma_2+i\alpha)},p^{-\sigma_1})-\rho_\D(p^{-\sigma_2},p^{-\sigma_1})|<\frac{\rho_\D(p^{-\sigma_2},p^{-\sigma_1})}{3}
\] 
for every prime number $p\leq k_1$. If we then let $T_1>0$ be such that $\rho_\D(\sigma_2+i\alpha,\sigma_1)\geq1-\frac{1}{2^3}$ for every $a> T_1$, we may apply Kronecker's theorem to find a real number $\alpha_2>T_1$ such that $|e^{-i\alpha_2 \log p}-1|<\varepsilon$ for every prime number $p\leq k_1$. Defining $s_2:=\sigma_2+i\alpha_2$, we have by \eqref{Gleason} that for some prime number $p_i\leq k_1$ it is 
\[
|\rho_{\mathscr{H}^{\infty}}(s_1,s_2)-\rho_\D(p_i^{-\sigma_2},p_i^{-\sigma_1})|<\frac{\rho_\D(p_i^{-\sigma_2},p_i^{-\sigma_1})}{3}.
\]  

If $s_1,\dots,s_n$ have already been defined, where $s_k=\sigma_k+i\alpha_k$ for some $\alpha_k$, we again have that since $\sup_{\alpha\in \R}\rho_\D(p^{-(\sigma_{n+1}+i\alpha)},p^{-s_n})\to 0$ as $p\to\infty$, there is a positive integer $k_n$ such that for every prime number $p>k_n$ and every real number $\alpha$ it is 
\[
\rho_\D(p^{-(\sigma_{n+1}+i\alpha)},p^{-s_n})<\frac{\rho_\D(2^{-\sigma_{n+1}},2^{-\sigma_n})}{2}.
\] 
Considering $\varepsilon>0$ such that 
\[
|\rho_\D(p^{-(\sigma_{n+1}+i(\alpha_n+\alpha))},p^{-s_n})-\rho_\D(p^{-(\sigma_{n+1}+i\alpha_n)},p^{-s_n})|<\frac{\rho_\D(p^{-(\sigma_{n+1}+i\alpha_n)},p^{-s_n})}{3}
\] 
for every prime number $p\leq k_n$ whenever $|1-e^{i\alpha}|<\varepsilon$, and $T_n>0$ such that for every $\alpha>T_n$ it is 
\[
\rho_\D(\sigma_{n+1}+i\alpha,s_k)\geq1-\frac{1}{2^{|n-k|+3}}\quad \text{for } k=1,\dots,n
\] 
we get by Kronecker's theorem an $\alpha_{n+1}>T_n$ such that $|e^{-i\alpha_{n+1} \log p}-1|<\varepsilon$ for every prime number $p\leq k_n$. If we then define $s_{n+1}:=\sigma_{n+1}+i(\alpha_{n+1}+\alpha_n)$, we observe that by construction there must be a prime number $p_j\leq k_n$ such that 
\[
|\rho_{\mathscr{H}^{\infty}}(s_n,s_{n+1})-\rho_\D(p_j^{-\sigma_{n+1}},p_j^{-\sigma_n})|<\frac{\rho_\D(p_j^{-\sigma_{n+1}},p_j^{-\sigma_n})}{3},
\] 
and we claim that the recursively defined sequence $(s_n)_n$ is interpolating for $H^\infty(\C_+)$ but not for $\mathscr{H}^{\infty}$. In fact, since $\rho_\D(s_n,s_k)\geq1-\frac{1}{2^{|n-k|+2}}$ whenever $n\neq k$, we have for every $n$ that 
\[
\sum_{k\neq n}(1-\rho_\D(s_n,s_k))\leq \sum_{k\neq n} \frac{1}{2^{|n-k|+2}}< 2\sum_{k=1}^\infty \frac{1}{2^{k+2}}=\frac{1}{2},
\] 
and therefore by Lemma \ref{lema-blaschke} it is  $\inf_n\prod_{k\neq n}\rho_\D(s_n,s_k)\geq \frac{1}{2}$, and thus Carleson's theorem implies that $(s_n)_n$ is interpolating for $H^\infty(\C_+)$. To see why $(s_n)_n$ cannot be interpolating for $\mathscr{H}^{\infty}$, observe that by construction we have that for every $n$ it is 

\[
\rho_{\mathscr{H}^{\infty}}(s_n,s_{n+1})<\frac{4}{3}\rho_\D(2^{-\sigma_{n+1}},2^{-\sigma_n})=\frac{4}{3}\frac{2^{\sigma_n}-2^{\sigma_{n+1}}}{2^{\sigma_n+\sigma_{n+1}}-1},
\] 
and since by the Mean Value theorem there is a $c\in(\sigma_{n+1},\sigma_{n})$ such that $2^{\sigma_n}-2^{\sigma_{n+1}}=2^c(\sigma_{n}-\sigma_{n+1})$,  we have in particular that 
\[
\rho_{\mathscr{H}^{\infty}}(s_n,s_{n+1})<\frac{4}{3}\frac{2^c\log2(\sigma_{n}-\sigma_{n+1})}{2^{\sigma_n+\sigma_{n+1}}-1}\leq \left(1-\frac{\sigma_{n+1}}{\sigma_n}\right)\frac{4}{3}\frac{2^{\sigma_n}\sigma_n\log2}{2^{\sigma_n+\sigma_{n+1}}-1}.
\] 
Now, taking into account that $\lim_{x\to 0}\frac{2^x-1}{x}=\log2$, we have that 
\[
\lim_{n\to\infty}\frac{2^{\sigma_n}\sigma_n\log2}{2^{\sigma_n+\sigma_{n+1}}-1}=\lim_{n\to\infty}\frac{\sigma_n}{\sigma_n+\sigma_{n+1}}=\lim_{n\to\infty}\frac{1}{1+(\sigma_{n+1}/\sigma_n)},
\] 
and therefore the hypothesis $\limsup_n\frac{\sigma_{n+1}}{\sigma_n}=1$ implies  $\inf_{n}\rho_{\mathscr{H}^{\infty}}(s_n,s_{n+1})=0$, so $(s_n)_n$ is not $\rho_{\mathscr{H}^{\infty}}$-separated, and in particular it cannot be interpolating for $\mathscr{H}^{\infty}$.

\section{Interpolating Sequences For Bidual Spaces}\label{Section-5}
\vspace{0.3cm}
Regarding interpolating sequences for the second dual of a uniform algebra $A$, Theorem \ref{Main-teo-bidual} provides only a sufficient condition for sequences in $M_A$ to be interpolating for $A^{**}$, so now we address the problem of characterizing the interpolating sequences in $M_{A^{**}}$. Motivated by Hoffman's classical characterization of interpolating sequences in the Shilov boundary of $H^\infty(\mathbb{D})$, we establish that a sequence in the Shilov boundary of the second dual of a uniform algebra $A$ is interpolating for $A^{**}$ if and only if it is discrete with respect to the $w^*$-topology.
We choose to work in a slightly more general frame, considering unital commutative Banach algebras whose Gelfand transform is an isomorphic embedding rather than uniform algebras. We call such Banach algebras \emph{uniformizable}, since they are isomorphic to uniform algebras. In particular, we have that the second dual of a uniformizable algebra $A$ is also uniformizable. 

The main theorem of this section is then the following: 

\begin{theorem}\label{int-bid}
    Let $A$ be a uniformizable Banach algebra. Then a sequence $(x_n)_n$ in $\partial_{A^{**}}$ is interpolating for $A^{**}$ if and only if it is discrete for the $w^*$-topology.
\end{theorem}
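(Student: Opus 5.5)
Necessity is the easy direction and I will do it first. If $(x_n)_n\subset M_{A^{**}}$ is interpolating for $A^{**}$, then for any $n$ there is $f\in A^{**}$ with $f(x_n)=1$ and $f(x_k)=0$ for $k\neq n$. The set $\{x\in M_{A^{**}}: |\hat f(x)-1|<1/2\}$ is $w^*$-open, contains $x_n$, and misses every $x_k$ with $k\neq n$. So the sequence is discrete for the $w^*$-topology. This argument uses nothing about the Shilov boundary.

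For sufficiency the plan is to exploit the structure of $\partial_{A^{**}}$, following the strategy announced in the introduction.

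\emph{Step 1.} Since $A$ is uniformizable, the Gelfand transform identifies $A$ (up to isomorphism) with a closed subalgebra of $C(\partial_A)$. Dualizing, $A^{**}$ is a $w^*$-closed subalgebra of $C(\partial_A)^{**}$, and $C(\partial_A)^{**}\cong C(K)$ with $K$ hyperstonean (the second dual of a commutative $C^*$-algebra is a commutative von Neumann algebra). I then want to show that $\partial_{A^{**}}$ is hyperstonean. The route is to show that restriction identifies $A^{**}$ with a $w^*$-closed subalgebra of $C(K)$ whose Shilov boundary is a closed subset of $K$, namely the support of the annihilator structure. More concretely, I will show that $C(\partial_{A^{**}})$ is a quotient of $C(K)$ by a $w^*$-closed ideal. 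Such a quotient is again a von Neumann algebra, so $\partial_{A^{**}}$ is hyperstonean.

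\emph{Step 2.} A key observation is that every point $x$ of $\partial_{A^{**}}$ is a peak-type point for a band. For a discrete sequence, choose pairwise disjoint clopen sets $U_n\ni x_n$ in $\partial_{A^{**}}$. This is possible because hyperstonean spaces are extremally disconnected: a countable discrete subset has pairwise disjoint clopen neighbourhoods, and moreover their union has closure homeomorphic to $\beta\N$ (every countable discrete subset of an extremally disconnected space is $C^*$-embedded). The characteristic function $\chi_{U_n}$ lies in $C(\partial_{A^{**}})$. The task is then to lift it to $A^{**}$.

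\emph{Step 3 (main obstacle).} The lifting will go through an Amar--Lederer / Glicksberg-type peak set theorem in the bidual. For a $w^*$-closed subalgebra $A^{**}$ of the von Neumann algebra $C(K)$, the band of measures generated by representing measures is $w^*$-closed. Using the F.~and M.~Riesz / Glicksberg argument, each clopen $U$ of $\partial_{A^{**}}$ corresponds to a central projection $P$ in $C(K)$ with $PA^{**}\subset A^{**}$. This follows because the annihilator $(A^{**})_\perp$ is a band-invariant subspace: $A^\perp$ is an $L$-band-closed subspace in $M(\partial_A)$ by the abstract F.~and M.~Riesz theorem, and this passes to the bidual. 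Granting that, $\chi_U\in A^{**}$ for every clopen $U\subset\partial_{A^{**}}$. Then, given $(\alpha_n)\in\ell_\infty$, the function
\[
\sum_n \alpha_n\chi_{U_n}
\]
taken as a $w^*$-convergent sum of bounded partial sums, lies in $A^{**}$ and interpolates. Indeed, the partial sums have norm at most $\sup_n|\alpha_n|$ times the uniformization constant, and $A^{**}$ is $w^*$-closed. I expect the proof that clopen subsets of $\partial_{A^{**}}$ give idempotents in $A^{**}$ to be the hard part. The first approach I will try is to show that the restriction of $A^{**}$ to $\partial_{A^{**}}$ is an algebra $B$ with $B\cdot\chi_U\subseteq B$, by verifying that $\chi_U$ annihilates the band generated by $B^\perp$.
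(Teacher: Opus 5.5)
Your necessity argument is correct, and your sufficiency skeleton (hyperstonean $\partial_{A^{**}}$, disjoint clopen neighbourhoods $U_n\ni x_n$, realise each $\chi_{U_n}$ by an idempotent of $A^{**}$, then sum) is essentially the paper's. The genuine gap is the Step 3 lemma, and it is false: not every clopen $U\subseteq\partial_{A^{**}}$ is $\hat P|_{\partial_{A^{**}}}$ for an idempotent $P\in A^{**}$. Your justification misstates the abstract F.\ and M.\ Riesz theorem. That theorem does not make $A^\perp$ invariant under arbitrary band projections. It only gives invariance under the decomposition relative to the band generated by the representing measures of a point of $M_A$.

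Take $A=A(\D)$ on $\T$. Then $A^\perp=H^1_0\,dm$, so $A^{**}=H^\infty\oplus_\infty M_s(\T)^*$ and $\partial_{A^{**}}=M_{L^\infty(\T)}\sqcup K_s$, where $K_s$ is the spectrum of the von Neumann algebra $M_s(\T)^*$. Let $E\subseteq\T$ have $0<m(E)<1$, and let $U\subseteq M_{L^\infty(\T)}$ be the clopen set with $\widehat{\chi_E}=\chi_U$. An element of $A^{**}$ whose transform equals $\chi_U$ on $\partial_{A^{**}}$ would have $H^\infty$-component $h$ with $h=\chi_E$ a.e.\ on $\T$. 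Then $h^2=h$, so $h$ is constant, which is absurd. Equivalently, $\chi_EH^\infty\not\subseteq H^\infty$, and $A(\D)^\perp$ is not invariant under $\mu\mapsto\chi_E\mu$. Idempotents of $A^{**}$ correspond to clopen subsets of $M_{A^{**}}$, not of $\partial_{A^{**}}$, and here none of them splits $M_{L^\infty(\T)}$. So for a discrete sequence in $M_{L^\infty(\T)}$ your Step 3 cannot be carried out. The theorem's conclusion does hold in this example, but only via the Hoffman--Mortini result for $H^\infty$ together with the trivial $C(K_s)$ summand.

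You cannot close this gap by importing the paper's argument. The paper obtains the same idempotent by citing Shilov's Idempotent Theorem: $\hat f|_{\partial_{A^{**}}}$ idempotent, equal to $1$ on $\overline U$ and $0$ on $\overline V$. It then finishes with Bennett--Kalton rather than your $w^*$-summation, since the range of $T$ contains the finitely-valued sequences, which form a barrelled space. But Shilov's theorem needs a clopen partition of $M_{A^{**}}$, and the example above defeats that step too. What both arguments lack is a substitute for exact idempotents. For $H^\infty$ one can use outer functions with modulus $1$ on $E_n$ and $\varepsilon_n$ off $E_n$ (approximate idempotents) and remove the resulting error by iteration. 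Producing such functions for clopen subsets of $\partial_{A^{**}}$ for a general $A^{**}$ is exactly what would have to be proved.

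Two smaller points. First, Step 1 is only announced, not proved. Second, even granted orthogonal idempotents $P_n$, evaluation at $x_n\in\partial_{A^{**}}$ need not be $w^*$-continuous on $A^{**}$. So $F(x_n)=\alpha_n$, for a $w^*$-cluster point $F$ of the partial sums, must be obtained algebraically, from $FP_n=\alpha_nP_n$ and the separate $w^*$-continuity of the Arens product.
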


To prove the theorem, we need a topological result about the Shilov boundary of $A^{**}$, so first we recall some definitions. A compact space $S$ is said to be \emph{hyperstonean} if disjoint open sets have disjoint closures, and the union of the supports of all normal measures on $S$ is everywhere dense. Examples of hyperstonean spaces include the Stone-\v{C}ech compactification of any discrete space, the maximal ideal space of $L^\infty(\T)$, and any compact space $K$ for which $C(K)$ is isomorphic to a dual space. Moreover, for any compact space $K$ we have that $C(K)^{**}$ is isometrically isomorphic to $C(S)$ for some hyperstonean space $S$. The property of being hyperstonean is not hereditary even for closed subspaces, but there is a wider class of topological spaces called \emph{F-spaces}, which consists of those compact spaces for which any pair of disjoint $F_\sigma$ open sets has disjoint closures, that was proven to be hereditary for closed subspaces in \cite{Seever1968}. 


\begin{proposition}\label{hyperstonean-bid}
    If $A$ is a uniformizable Banach algebra with bidual space $A^{**}$, then the Shilov boundary of $A^{**}$ is a hyperstonean space.
\end{proposition}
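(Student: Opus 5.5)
The idea is to realize $\partial_{A^{**}}$ as a closed subset of a hyperstonean space $S$ and then show that it is not an arbitrary closed subset, but one that inherits both defining properties.

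\textbf{Embedding.} Since $A$ is uniformizable, the Gelfand transform embeds $A$ isomorphically onto a closed subalgebra of $C(K)$ with $K=\partial_A$. Taking biduals gives an isomorphic embedding $A^{**}\hookrightarrow C(K)^{**}\cong C(S)$, with $S$ hyperstonean. The Arens product on $C(K)^{**}$ is the pointwise product on $C(S)$, so this embedding is a unital algebra homomorphism. Hence $A^{**}$ is isomorphic to a closed subalgebra $B$ of $C(S)$ containing the constants. Because the embedding is an isomorphism, one checks that $B$ separates the points of the relevant quotient, and that $\partial_{A^{**}}$ is homeomorphic to the Shilov boundary $\partial_B\subset S$ (after identifying points of $S$ not separated by $B$).

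\textbf{Choice of $S$.} I would rather avoid the quotient altogether by choosing $K$ so that $S$ maps onto $M_{A^{**}}$ in a controlled way. Concretely, restriction of characters of $C(S)$ to $B$ gives a continuous map $S\to M_{A^{**}}$ whose image contains $\partial_{A^{**}}$. The claim I would prove is that this map is injective on a closed set $F\subset S$ and maps $F$ onto the Shilov boundary. For that I would use the fact that $\partial_B$ is the closure of the strong boundary points: for every $f\in B$, $|f|$ attains its maximum on $F$.

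\textbf{Main obstacle.} The main step is proving that $F$ is hyperstonean, not merely an F-space as Seever's result would give. I would show that $F$ is the support of a band of measures, namely the closure of the union of the supports of the normal measures on $S$ that are representing or annihilating-related for $B$. Two facts are needed:
\begin{enumerate}
\item A closed subset of a hyperstonean space that is regular closed (the closure of its interior) and open is again hyperstonean.
\item $F$ is regular closed. The argument is that if $f\in B$ peaks on a closed set, then, since $B$ is $w^*$-closed in $C(S)=C(K)^{**}$, the normal functional theory yields idempotent projections: the characteristic functions of clopen sets of $S$ behave like the limits $\lim f^n$. So $F$ is an intersection of clopen sets of $S$ that is also open.
\end{enumerate}

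\textbf{Conclusion.} Since $B$ is $w^*$-closed, the $w^*$-limits $\lim f^n$ lie in $B$, so the Shilov boundary is cut out by idempotents of $B$. Using $w^*$-density of normal measures, I would verify that $F$ is clopen in $S$. A clopen subset of a hyperstonean space is hyperstonean, which completes the proof. I expect the clopenness step to be the hard part. If it fails, the fallback is to check the two hyperstonean axioms directly on $F$: disjoint open sets have disjoint closures by extremal disconnectedness inherited through the idempotents, and the union of the supports of the normal measures, restricted to $F$, is dense in $F$.
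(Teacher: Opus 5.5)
\textbf{There is a genuine gap.} The two claims that carry all the weight are asserted rather than proved.

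The first claim is that the restriction map $\pi\colon S\to M_{A^{**}}$ is injective on a closed set $F$ with $\pi(F)=\partial_{A^{**}}$. Your reason is that the Shilov boundary is the closure of the strong boundary points, so every $|f|$ attains its maximum on $F$. That only says $\pi(F)$ is a boundary; it says nothing about injectivity. Zorn's lemma does give a closed $F$ on which $\pi$ is an irreducible surjection onto $\partial_{A^{**}}$. But by Gleason's theory of projective covers, once $F$ is extremally disconnected (for instance clopen, as you intend), such a map is injective exactly when $\partial_{A^{**}}$ is already extremally disconnected. So this claim is essentially the proposition itself.

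The second claim is that $F$ is clopen. Being ``an intersection of clopen sets'' is vacuous, since every closed subset of a zero-dimensional compact space is one. Closed subsets of hyperstonean spaces need not be hyperstonean: $\beta\N\setminus\N\subset\beta\N$ is not even extremally disconnected. So everything rests on openness, and for that you offer only ``using $w^*$-density of normal measures, I would verify''.

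\textbf{Idempotents of $B$ cannot fill either step, nor your fallback.} It is true that for $f\in B$ with $\lVert f\rVert\le 1$ the powers $f^n$ converge weak$^*$ in $C(S)$ to an idempotent, which lies in $B$ because $B$ is weak$^*$-closed. But $B$ usually has far too few idempotents.

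Take $A=A(\D)$ on $\T$. The F.\ and M.\ Riesz theorem gives $A^\perp\subset L^1(\T)$, hence $A^{**}\cong H^\infty\oplus M_s(\T)^*$. The summand $H^\infty$ has no nontrivial idempotents (each $f^n$ tends to $0$ or $1$). Yet its Shilov boundary is the hyperstonean space $M_{L^\infty(\T)}$. There the hyperstonean structure comes from the ambient von Neumann algebra, not from idempotents of $B$:
\begin{itemize}
\item $H^\infty$ separates the points of the clopen piece $M_{L^\infty(\T)}$ of $S$ (Hoffman);
\item that piece is a minimal boundary, because outer functions can have any modulus $e^{w}$ with $w\in L^\infty_{\R}(\T)$.
\end{itemize}
What is missing is an argument of this kind in general: a band of measures whose (automatically clopen) support in $S$ is separated by $B$ and is a minimal boundary.

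\textbf{Comparison with the paper.} The paper does not look for $F$ inside the envelope. It takes the whole image $(i\circ\kappa^{**})^*(K)$, where $C(M_A)^{**}\cong C(K)$. It shows this image is a boundary via $\lVert f^n\rVert\le C\,(\sup|\hat f|)^n$, with the supremum over the image; you need this power trick for your ``image contains $\partial_{A^{**}}$'' step when $A$ is only uniformizable. It then gets minimality from zero-dimensionality and Shilov's idempotent theorem.

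That route treats the image as homeomorphic to $K$, so it passes over exactly the non-injectivity you flagged. For $A=A(\D)$, the atoms $\delta_z$ with $z\in\D$ give isolated points of $K$. Their images form the connected set $\{\delta_z : z\in\D\}\subset M_{A^{**}}$, which is disjoint from $\partial_{A^{**}}$. Moreover, Shilov's theorem requires sets that are clopen in $M_{A^{**}}$, not merely clopen in a boundary. So importing the paper's idempotent step would not close your gap either.
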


\begin{proof}
    Since by hypothesis the Gelfand transform $\kappa \colon A \to C(M_A)$ is a bounded below homomorphism, so is its second adjoint $\kappa^{**} \colon A^{**} \to C(M_A)^{**}$. By the previous comments, there is an isometric isomorphism $i \colon C(M_A)^{**} \to C(K)$ where $K$ is a hyperstonean space, so considering the composition $i\circ \kappa^{**} \colon A^{**} \to C(K)$ we have that its adjoint is also a homomorphism which maps the spectrum of $C(K)$, which is homeomorphic to $K$, into the hyperstonean subspace $S:=(i\circ \kappa^{**})^*(K)$ of $M_{A^{**}}$. 
    
    In order to prove that $S$ is a boundary for $A^{**}$, we observe that otherwise there would exist a function $f$ in $A^{**}$ such that $\lVert\hat{f}\rVert=1$ while $\eta:=\sup_{x\in S}|\hat{f}(x)|<1$. Now, since $i\circ \kappa^{**} \colon A^{**} \to C(K)$ is bounded below, there exists a constant $C\geq 1$ such that $\lVert g\rVert\leq C\lVert i\circ \kappa^{**}(g)\rVert_\infty$ for every $g\in A^{**}$, and considering a positive integer $n$ such that $\eta^n<C^{-1}$ we would have that the function $h:=f^n$ would belong to $A^{**}$ and would satisfy $\lVert h\rVert\geq\lVert\hat{h}\rVert=\lVert(\hat{f})^n\rVert=\lVert\hat{f}\rVert=1$. However, this would imply that 
    \[
    1\leq \lVert h\rVert\leq C\lVert i\circ \kappa^{**}(h)\rVert_\infty=C \sup_{x\in S}|\hat{h}(x)|=C\eta^n <1,
    \] 
    and this contradiction proves that $S$ is a boundary. 

    Since $A^{**}$ is also uniformizable, its Gelfand transform is also an isomorphic embedding, and because $S$ is a boundary for $A^{**}$ we have that $\widehat{A^{**}}$ is isometrically isomorphic to $\widehat{A^{**}}|_S\leq C(S)$. Therefore, if we consider any proper closed set $X\subsetneq S$, we have that since $S$ is in particular zero-dimensional there would be a nonempty clopen set $U$ contained in $S\setminus X$, and then by Shilov's Idempotent Theorem \cite[p. 88]{Gamelin1969} there would exist a function $\hat{f}$ in $\widehat{A^{**}}$ such that $\hat{f}|_S\in \widehat{A^{**}}|_S$ is idempotent and satisfies $\hat{f}|_S(U)=1$ while $\hat{f}|_S(S\setminus U)=0$. The function $\hat{f}$ would then satisfy $\lVert\hat{f}\rVert>\sup_{x\in X}|\hat{f}(x)|$, so $X$ could not be a boundary and we conclude that $S=\partial_{A^{**}}$, from where the theorem follows.
\end{proof}

As a consequence, we observe that although the spectrum of $A$ embeds homeomorphically in $M_{A^{**}}$, the Shilov boundary of $A$ need not embed as a closed subset of $\partial_{A^{**}}$, because this would imply that $\partial_A$ is always a compact $F$-space and considering for instance the disk algebra $A(\D)$ this is not true. 

We now prove the main result of this section:

\begin{proof}[Proof of Theorem \ref{int-bid}]
    If is easily seen that if $(x_n)_n$ is interpolating for $A^{**}$ then it must be discrete, so we only prove the other implication. Suppose $(x_n)_n$ is a discrete sequence in $\partial_{A^{**}}$, the Shilov boundary of $A^{**}$. Since $A$ is a uniformizable Banach algebra, we have by Proposition \ref{hyperstonean-bid} that $\partial_{A^{**}}$ is a hyperstonean subspace $\partial_{A^{**}}\subseteq M_{A^{**}}$, and moreover $\widehat{A^{**}}$ is isometrically isomorphic to $\widehat{A^{**}}|_{\partial_{A^{**}}}\leq C(\partial_{A^{**}})$. Now, given two disjoint subsets of natural numbers $P,Q\subseteq\N$, there are disjoint open sets $U,V\subset \partial_{A^{**}}$ such that $\{x_n\}_{n\in P}\subseteq U$ and $\{x_n\}_{n\in Q}\subseteq V$, and thus since $\partial_{A^{**}}$ is hyperstonean, we have that 
    \[
    \overline{\{x_n\}}_{n\in P}\cap \overline{\{x_n\}}_{n\in Q}\subseteq \overline{U}\cap \overline{V}=\emptyset.
    \]
    
    By Shilov's Idempotent Theorem, there is a function $\hat{f}$ in $\widehat{A^{**}}$ such that $\hat{f}|_{\partial_{A^{**}}}\in \widehat{A^{**}}|_{\partial_{A^{**}}}$ is idempotent and satisfies $\hat{f}|_{\partial_{A^{**}}}(\overline{U})=1$ while $\hat{f}|_{\partial_{A^{**}}}(\overline{V})=0$. We thus have that if $T \colon A^{**}\rightarrow \ell_\infty$ is the interpolating operator associated with the sequence $(x_n)_n$, then $T(A^{**})$ contains the space of finite-valued sequences, which by a well-known theorem of Grothendieck is a barrelled space, and therefore by \cite{BennettKalton1973} we conclude that the sequence $(x_n)_n$ is interpolating for $A^{**}$. 
\end{proof}

\section{Remarks on Carleson's generalized problem}\label{Section-6}
\vspace{0.3cm}
We conclude by discussing a possible approach to Carleson's generalized problem through the study of Carleson measures in the polydisc, as a positive solution to this problem would yield a significant strengthening of Theorem \ref{Main Theorem}.

Let $m$ denote the Lebesgue measure in $\D^N$. Consider for each $z=re^{i\theta}$ in $\D$ the set $I_z:=\{e^{i\sigma}:|\sigma-\theta|<1-r\}$, and if $U\subseteq \T^N$ is an open connected,   let  $S(U):=\{(z_1,\dots,z_N)\in\D^N : I_{z_1}\times\dots\times I_{z_N}\subseteq U \}$. Following \cite{Chang1979}, we say that $\mu$ is a Carleson measure in $\D^N$ if there is a positive constant $C>0$ such that for every open connected set $U\subseteq \T^N$ it holds
\begin{equation}\label{med-carl-poly}
    \mu(S(U))\leq C m(U). 
\end{equation} 
The infimum of all $C>0$ such that \eqref{med-carl-poly} holds for every open connected set $U\subseteq \T^N$ is called \emph{Carleson's intensity} of $\mu$. 
Carleson measures arise naturally from the theory of interpolating sequences. In fact, given a sequence $(x_n)_n$ in $\D^N$, associating to each  $x_n=(w_1^n,w_2^n,\dots,w_N^n)$ the rectangle $R_n$ in $\T^N$ which is centered at the point $(w_1^n/|w_1^n|,w_2^n/|w_2^n|,\dots,w_N^n/|w_N^n|)$ and has side lengths $2(1-|w_1^n|),\dots,2(1-|w_N^n|)$, it follows from a theorem of Varopoulos \cite{Varopoulos1972} that if $(x_n)_n$ is interpolating for $H^\infty(\D^N)$, then $\mu:=\sum_{n=1}^\infty|R_n|\delta_{x_n}$ is a Carleson measure on $\D^N$. Moreover, the authors of \cite{BerndtssonChangLin1987} proved that if $(x_n)_n$ satisfies \eqref{Carl-gen}, then Carleson's intensity of $\mu$ is independent of the dimension $N$.

Our approach to Carleson's generalized problem is motivated by the observation made in \cite{BerndtssonChangLin1987}  that a positive answer to the problem would follow if every sequence in $\D^N$ satisfying \eqref{Carl-gen} were interpolating for $H^\infty(\D^N)$ with interpolating constant bounded by some constant independent of $N$. 

The following proposition reduces the problem to a quantitative comparison between interpolation constants and Carleson intensities. In particular, if for each $(x_n)_n$ we denote by $M(\{x_n\}_n)$ its constant of interpolation, and we let $C(\{x_n\}_n)$ be the Carleson intensity of the measure $\mu:=\sum_{n=1}^\infty|R_n|\delta_{x_n}$, we have:

\begin{proposition}\label{reduccion-carl-gen}
   Suppose that 
   \begin{equation}\label{prop}
       \sup\left\{ \frac{M(\{x_n\}_n)}{C(\{x_n\}_n)}: \{x_n\}_n \text{ is int. for } H^\infty(\D^N)\text{ for some } N\in\N\right\}<\infty.
   \end{equation} 
   Then for any dual uniform algebra $A=X^*$ and any sequence $(x_n)_n\subset M_A\cap X$ satisfying 
   \[
   \inf_n \prod_{k\neq n}\rho_A(x_n,x_k)\geq \delta>0
   \] 
   we would have that $(x_n)_n$ is interpolating for $A$.
\end{proposition}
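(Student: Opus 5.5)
The plan is to reduce the abstract problem to polydiscs in the way suggested in \cite{BerndtssonChangLin1987}, and to use hypothesis \eqref{prop} together with the dimension-free bound on Carleson intensities to obtain interpolation constants in $\D^N$ that do not depend on $N$.

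\emph{Step 1: finite sections in $A$.} Fix $N$ and the first $N$ points $x_1,\dots,x_N$. Each $x_n$ lies in $X$, so the supremum defining $\rho_A(x_n,x_k)$ is attained by Alaoglu's theorem (as in the proof of Theorem \ref{Main Theorem}). Choose $f_{n,k}\in A$ with $\lVert f_{n,k}\rVert\le 1$, $f_{n,k}(x_k)=0$ and $f_{n,k}(x_n)$ close to $\rho_A(x_n,x_k)$. Combining these by products and Möbius maps, we build for each $n\le N$ a single function $g_n$ that vanishes at $x_k$ for $k\ne n$. We then set $\Phi_N:=(h_1,\dots,h_N)\colon M_A\to\overline{\D}^N$ with $\lVert h_j\rVert<1$, chosen so that the image points $y_n:=\Phi_N(x_n)\in\D^N$ satisfy
\[
\rho_{\D^N}(y_n,y_k)\ge \rho_A(x_n,x_k)-\varepsilon_{n,k}.
\]
The natural candidate is $h_j=g_j$ suitably rescaled. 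In that case the $j$-th coordinate of $y_n$ is essentially a point attaining $\rho_A(x_j,x_n)$, so the polydisc distance, which is a maximum over coordinates, dominates the Gleason distance up to $\varepsilon$. Since $\rho_{\D^N}(y_n,y_k)\le\rho_A(x_n,x_k)$ always holds by \eqref{Gleason-equality}, we get
\[
\inf_n\prod_{k\ne n,\,k\le N}\rho_{\D^N}(y_n,y_k)\ge \delta'
\]
with $\delta'$ depending only on $\delta$, after choosing $\varepsilon_{n,k}$ summable in the Blaschke sense via Lemma \ref{lema-blaschke}.

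\emph{Step 2: uniform constants in $\D^N$.} By \cite{BerndtssonChangLin1987}, the sequence $(y_n)_{n\le N}$ is interpolating for $H^\infty(\D^N)$, and the Carleson intensity of $\sum|R_n|\delta_{y_n}$ is bounded by a constant $C(\delta')$ independent of $N$. Hypothesis \eqref{prop} then gives $M(\{y_n\})\le K\,C(\delta')=:M_0$, again independent of $N$. So for any $(\alpha_n)$ with $\lVert(\alpha_n)\rVert\le1$ there is $F_N\in H^\infty(\D^N)$ with $F_N(y_n)=\alpha_n$ for $n\le N$ and $\lVert F_N\rVert\le M_0$.

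\emph{Step 3: pull back and take a limit.} Because $\lVert h_j\rVert<1$, the composition $F_N\circ\Phi_N$ makes sense in $A$: expand $F_N$ in a power series and use the Cauchy estimates with completeness, exactly as in the one-variable discussion preceding Naftalevi\v{c}'s theorem. The composition has norm at most $M_0$ and interpolates $\alpha_n$ at $x_n$ for $n\le N$. A $w^*$-cluster point of $(F_N\circ\Phi_N)_N$ exists, and since $x_n\in X$ it interpolates every $\alpha_n$. Hence $(x_n)_n$ is interpolating for $A$.

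The main obstacle is Step 1. A single $f_{n,k}$ realises only one pair, so the map $\Phi_N$ must use roughly $N^2$ coordinates, one for each pair (this is allowed, since $N$ is arbitrary and the constants are dimension-free). With the coordinate $f_{n,k}$ composed with a Möbius map sending $f_{n,k}(x_k)=0$, the pair $(n,k)$ is separated correctly in that coordinate. The remaining difficulty is ensuring the norms stay strictly below $1$ while losing only $\varepsilon$ in the products. I would handle this by replacing $f_{n,k}$ with $(1-\eta)f_{n,k}$, where $\eta$ is chosen after $N$.
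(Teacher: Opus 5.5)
Your proposal is correct and follows the paper's route. Hypothesis \eqref{prop}, combined with the dimension-free bound on Carleson intensities from \cite[Proposition 6]{BerndtssonChangLin1987}, gives interpolation constants in polydiscs that do not depend on the dimension; these are transferred to uniformly bounded finite interpolants in $A$, and a $w^*$-cluster point is taken using $x_n\in X$.

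Your map $\Phi_N$, with one coordinate $(1-\eta)f_{n,k}$ per pair, makes explicit the transfer step that the paper compresses into ``Therefore'' (relying on the observation of \cite{BerndtssonChangLin1987}). Discard your first candidate, with $N$ coordinates $h_j=g_j$: it only gives $\rho_{\D^N}(y_n,y_k)=\max(|g_n(x_n)|,|g_k(x_k)|)$, and the product of this over $k$ can tend to $0$ as $N$ grows.
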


\begin{proof}
    We claim that under our hypothesis the interpolation constant of a sequence $(x_n)_n\subset \D^N$ satisfying the condition $\inf_{n\in\N}\prod_{k\neq n}\rho_{\D^N}(x_n,x_k):=\delta>0$ could be bounded above by a constant depending only on $\delta$ and not on the dimension $N$. Otherwise, we could find an increasing sequence of natural numbers $(N_k)_k$ and sequences $(x_n^{N_k})_n\subset \D^{N_k}$ satisfying 
    \[
    \inf_{j}\prod_{\substack{i=1\\ i\neq j}}^\infty\rho_{\D^{N_k}}(x_i^{N_k},x_j^{N_k})\geq\delta>0\quad \text{for every } k\in\N 
    \] 
    while $\lim_{k\to\infty}M(\{x_n^{N_k}\}_n)=\infty$. However this is not possible, as \eqref{prop} would imply that $\lim_{k\to\infty}C(\{x_n^{N_k}\}_n)=\infty$, contradicting \cite[Proposition 6]{BerndtssonChangLin1987}. 
    
    Therefore, given $\alpha=(\alpha_n)_n\in\ell_\infty$, we have that for every natural number $N$ there would exist a function $f_N\in A$ with $\lVert f_N\rVert\leq M$ satisfying 
    \[
    f_N(x_n)=\alpha_n\quad \text{for } n=1,\dots,N,
    \] 
    and thus Alaoglu's Theorem guarantees that $(f_n)_n\subset A$ has a subnet $(f_{n_i})_i$ $w^*$-converging to a function $f\in A$ that necessarily satisfies $\lVert f \rVert\leq M$ and 
    \[
    f(x_n)=\lim_i f_{n_i}(x_n)=\alpha_n\quad \text{for every } n.
    \]
\end{proof}

\section*{Acknowledgment}

\end{document}